\newtheorem{thm}{Theorem}[section]
\newtheorem{lem}[thm]{Lemma}
\newtheorem{prop}[thm]{Proposition}
\newtheorem{cor}[thm]{Corollary}
\newtheorem{rem}[thm]{Remark}
\newtheorem{exam}[thm]{Example}
\begin{document}

\title{Vector bundles and Arakelov geometry on the projective line over the integers}
\author{Fabian Reede\footnote{Georg-August-Universit\"at G\"ottingen, freede@uni-math.gwdg.de}}
\maketitle

\begin{abstract}
\noindent We study locally free sheaves of rank two on $\mathbb{P}^1_{\mathbb{Z}}$, especially indecomposable ones. Subsequently we apply various concepts of Arakelov geometry to these sheaves. We compute for example the arithmetic Chern classes and use the arithmetic Riemann-Roch theorem.
\end{abstract}
\section*{Introduction}
It is a well known theorem that every locally free sheaf $\mathcal{E}$ of rank $r$ on $\mathbb{P}^1_k$ is a direct sum of $r$ line bundles, here $k$ is an arbitrary field. That is, there are $r$ locally free sheaves $L_i$ of rank one and an isomorphism 
\begin{center}
$\mathcal{E}\cong\bigoplus\limits_{i=0}^r L_i$.
\end{center} 
But if one replaces the field $k$ with the ring $\mathbb{Z}$, then this theorem fails. In (\cite{roberts}) Roberts constructs indecomposable locally free sheaves on $\mathbb{P}^1_{\mathbb{Z}}$ of arbitrary rank $r$ using transition matrices with respect to an open cover of $\mathbb{P}^1_{\mathbb{Z}}$.\\
In the first chapter we study locally free sheaves of rank two on $\mathbb{P}^1_{\mathbb{Z}}$. We prove that every such sheaf can be written as an extension of a line bundle by a twisted sheaf of ideals. This sheaf of ideals (also called ideal sheaf) is defined by a local complete intersection of codimension two in $\mathbb{P}^1_{\mathbb{Z}}$.\\ 
We prove that a locally free sheaf $\mathcal{E}$ of rank two is indecomposable if the local complete intersection is not empty. This gives a method of constructing indecomposable locally free sheaves of rank two on $\mathbb{P}^1_{\mathbb{Z}}$ globally, without having to use an open cover.\\ 
We do all this by a careful analysis of the behaviour of the ideal sheaves on the fibers of the morphism $\mathbb{P}^1_{\mathbb{Z}}\rightarrow \mathbb{Z}$. We finish this part by studying a simple example.\\
In the last part of this chapter we study the cohomology groups of twisted ideal sheaves to get a better understanding of the cohomology groups of the locally free sheaf $\mathcal{E}$ via the long exact sequence of cohomology. As an example we describe locally free sheaves of rank two without cohomology.
\medskip

\noindent Since $\mathbb{P}^1_{\mathbb{Z}}\rightarrow \mathbb{Z}$ is an arithmetic surface, we can use the theory of Arakelov geometry to study further properties and invariants of these locally free sheaves which are not available since we replaced the field $k$ by $\mathbb{Z}$.\\ 
We start by computing the arithmetic Chern classes $\widehat{c}_i(\overline{\mathcal{E}})$ of a locally free sheaf $\mathcal{E}$ of rank two. In order to do this we need to define the Hermitian vector bundle $\overline{\mathcal{E}}$. This is the locally free sheaf $\mathcal{E}$ with a Hermitian metric $h$ on the associated vector bundle $E$ on the manifold $\mathbb{P}^1_{\mathbb{Z}}(\mathbb{C})=\mathbb{C}\mathbb{P}^1$. By using the fact that $\mathcal{E}$ can be written as an extension we can explicitly describe the metric we want to use for our purposes.\\ 
We go on and recall the arithmetic Riemann-Roch theorem due to Gillet and Soul\'{e}. As an application we derive the arithmetic Hirzebruch-Riemann-Roch theorem for a locally free sheaf of rank two on $\mathbb{P}^1_{\mathbb{Z}}$. Other applications include the computation of the Ray-Singer analytic torsion of the line bundle $\mathcal{O}_{\mathbb{P}^1_{\mathbb{Z}}}(a)$ for $a\in \mathbb{Z}$ and the computation of an arithmetic (Arakelov) analogue of the usual Euler characteristic.
\medskip

\noindent This article grew out of the author's wish to understand the theory of vector bundles in Arakelov geometry. Since much of the existing literature on this issue is written in broad generality, the author tried to understand this theory by looking at the simplest nontrivial example and computing everything explicitly. This article should give a taste of the techniques and computations used in Arakelov geometry. Another aspect was to get a rough overview of the existing literature.
\subsection*{Notation}
In the following $X$ denotes the scheme $\mathbb{P}^1_{\mathbb{Z}}$ and $Y$ the scheme $Spec(\mathbb{Z})$. Furthermore $f: X\rightarrow Y$ denotes the structural morphism.\\ 
For $p\in Y$ we denote the fiber $f^{-1}(p)\subset X$ by $X_p$. We have $X_p=\mathbb{P}^1_{\mathbb{F}_p}$. The fiber $X_0$ over the generic point $0 \in Y$ is given by $\mathbb{P}^1_{\mathbb{Q}}$.\\ 
Similarly if $\mathcal{F}$ is a coherent sheaf on $X$ we denote the induced sheaf on $X_p$ by $\mathcal{F}_p:=\iota_p^{*}\mathcal{F}$ where $\iota_p: X_p\rightarrow X$ is the closed immersion of the special fiber $X_p$ for $p\neq 0$. All sheaves are supposed to be coherent if not otherwise stated.\\ 
If $Z$ is a local complete intersection of codimension two in $X$ we denote by $\mathcal{I}_Z$ the associated sheaf of ideals in $\mathcal{O}_X$.\\ 
All Hermitian metrics are supposed to be invariant with respect to the complex conjugation on $\mathbb{C}$, see (\cite[Definition IV.4.1.4.]{soul3}).
\tableofcontents

\section{Locally free sheaves on the projective line over the integers}
\subsection{Locally free sheaves of rank two}
Let $\mathcal{E}$ be a locally free sheaf of rank two on $X$. We want to understand $\mathcal{E}$ as an extension of sheaves. To do this we need the following proposition:
\begin{prop}[\normalfont{\cite[Proposition 1.3.1.]{mori2}}]\label{subsh}
Let $W$ be an irreducible noetherian integral scheme with generic point $\eta$ and let $K=\mathcal{O}_{W,\eta}$ be the function field of $W$. If $\mathcal{E}$ is a torsion free coherent sheaf on $W$ we denote by $\Sigma(W,\mathcal{E})$ the set of all saturated $\mathcal{O}_W$-subsheaves of $\mathcal{E}$. Furthermore denote by $\Sigma(K,\mathcal{E}_\eta)$ the set of all vector subspaces of $\mathcal{E}_\eta$. Then the map $\gamma_{\mathcal{E}}: \Sigma(W,\mathcal{E}) \rightarrow \Sigma(K,\mathcal{E}_\eta)$ given by $\gamma_{\mathcal{E}}(\mathcal{F})=\mathcal{F}_\eta$ is bijective.
\end{prop}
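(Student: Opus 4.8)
The plan is to construct an explicit inverse $\delta_{\mathcal{E}}\colon \Sigma(K,\mathcal{E}_\eta)\rightarrow \Sigma(W,\mathcal{E})$ to $\gamma_{\mathcal{E}}$ and to verify that the two composites are the identity. Write $j\colon \eta\hookrightarrow W$ for the inclusion of the generic point, so that $\mathcal{E}_\eta$ is the stalk of $\mathcal{E}$ at $\eta$ and $K=\mathcal{O}_{W,\eta}$. Since $W$ is integral and $\mathcal{E}$ is torsion free, the canonical morphism $\mathcal{E}\rightarrow j_*\mathcal{E}_\eta$ is injective, and I will regard $\mathcal{E}$ as a subsheaf of $j_*\mathcal{E}_\eta$. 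For a $K$-subspace $V\subseteq\mathcal{E}_\eta$ I set
\[
\delta_{\mathcal{E}}(V):=\mathcal{E}\cap j_*V\subseteq j_*\mathcal{E}_\eta,
\]
the subsheaf whose sections over an open $U$ are exactly those $s\in\mathcal{E}(U)$ whose germ at $\eta$ lies in $V$. As a subsheaf of the coherent sheaf $\mathcal{E}$ on a noetherian scheme, $\delta_{\mathcal{E}}(V)$ is again coherent.

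First I would check that $\delta_{\mathcal{E}}(V)$ is saturated. The inclusion $j_*V\subseteq j_*\mathcal{E}_\eta$ induces an injection $\mathcal{E}/\delta_{\mathcal{E}}(V)\hookrightarrow j_*(\mathcal{E}_\eta/V)$; since $\mathcal{E}_\eta/V$ is a $K$-vector space, any local section of $\mathcal{E}/\delta_{\mathcal{E}}(V)$ annihilated by a nonzero element of $\mathcal{O}_W$ must vanish in $\mathcal{E}_\eta/V$ and hence be zero. Thus $\mathcal{E}/\delta_{\mathcal{E}}(V)$ is torsion free, i.e. $\delta_{\mathcal{E}}(V)\in\Sigma(W,\mathcal{E})$.

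Next I would verify $\gamma_{\mathcal{E}}\circ\delta_{\mathcal{E}}=\mathrm{id}$. Taking stalks at $\eta$ is an exact localization and commutes with the intersection above; since $(j_*V)_\eta=V$ and $\mathcal{E}_\eta$ is the full ambient space at $\eta$, one obtains $\delta_{\mathcal{E}}(V)_\eta=V$. For the other composite $\delta_{\mathcal{E}}\circ\gamma_{\mathcal{E}}=\mathrm{id}$, fix a saturated subsheaf $\mathcal{F}$. The inclusion $\mathcal{F}\subseteq\delta_{\mathcal{E}}(\mathcal{F}_\eta)$ is immediate from the definition. For the reverse inclusion I would use saturation: the quotient $\delta_{\mathcal{E}}(\mathcal{F}_\eta)/\mathcal{F}$ is a subsheaf of the torsion free sheaf $\mathcal{E}/\mathcal{F}$, yet its generic fiber is $\delta_{\mathcal{E}}(\mathcal{F}_\eta)_\eta/\mathcal{F}_\eta=\mathcal{F}_\eta/\mathcal{F}_\eta=0$, so it is a torsion subsheaf of a torsion free sheaf and therefore vanishes. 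This yields $\mathcal{F}=\delta_{\mathcal{E}}(\mathcal{F}_\eta)$ and establishes bijectivity.

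I expect the main obstacle to be the bookkeeping around the identification $\mathcal{E}\subseteq j_*\mathcal{E}_\eta$ and the compatibility of the sheaf-theoretic intersection with passage to the stalk at $\eta$: one must confirm that ``germ at $\eta$ lies in $V$'' genuinely defines a subsheaf (closedness under restriction together with the gluing axiom) and that forming $\delta_{\mathcal{E}}(V)$ commutes with localization at $\eta$. Once these functorial points are settled, the saturation argument and the two identity checks follow formally from torsion freeness.
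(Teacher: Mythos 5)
Your proof is correct and matches the paper's approach: the paper cites this result from Moriwaki and, in the remark immediately following the proposition, describes exactly the inverse $V\mapsto\bigl(U\mapsto\mathcal{E}(U)\cap V\bigr)$ that you construct and verify. The only point worth tightening is the coherence of $\delta_{\mathcal{E}}(V)$ --- an arbitrary $\mathcal{O}_W$-submodule of a coherent sheaf need not be coherent, but yours is, being the kernel of the morphism of quasi-coherent sheaves $\mathcal{E}\rightarrow j_*(\mathcal{E}_\eta/V)$ on a noetherian scheme.
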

\begin{rem}
\normalfont A subsheaf $\mathcal{F}$ of $\mathcal{E}$ is called saturated if the quotient $\mathcal{E}/\mathcal{F}$ is torsion free.
\end{rem}
\begin{rem}
\normalfont The inverse map can also be described easily: for a vector subspace $W$ in $\mathcal{E}_{\eta}$ we define the saturated subsheaf $\mathcal{F}$ associated to W by $\mathcal{F}(U):=\mathcal{E}(U)\cap W$ for every Zariski open subset $U$ in $X$.
\end{rem}
\begin{lem}
Assume $\mathcal{E}$ is a locally free sheaf of rank two on X, then there is an exact sequence
\begin{equation}\begin{CD}\label{exseq1}
0 @>>> \mathcal{O}_X(a) @>>> \mathcal{E} @>>> \mathcal{I}_Z(b) @>>> 0.
\end{CD}\end{equation}
Here $Z$ is a local complete intersection of codimension two in $X$ and $a,b\in \mathbb{Z}$ with $a\geq b$.
\end{lem}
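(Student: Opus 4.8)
The plan is to manufacture the sub-line-bundle $\mathcal{O}_X(a)$ from the splitting type on the generic fibre and then to identify the quotient. Since $X$ is integral with generic point $\eta$, and $\eta$ is at the same time the generic point of the generic fibre $X_0=\mathbb{P}^1_{\mathbb{Q}}$, the restriction $\mathcal{E}_0$ splits by Grothendieck's theorem as $\mathcal{O}_{X_0}(a_0)\oplus\mathcal{O}_{X_0}(b_0)$ with $a_0\ge b_0$. The larger summand determines a one-dimensional subspace $W\subset\mathcal{E}_\eta$, and Proposition \ref{subsh} attaches to $W$ a unique saturated subsheaf $\mathcal{F}\subset\mathcal{E}$ with $\mathcal{F}_\eta=W$. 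As the kernel of the map from the locally free, hence reflexive, sheaf $\mathcal{E}$ onto the torsion-free quotient $\mathcal{E}/\mathcal{F}$, the sheaf $\mathcal{F}$ is reflexive of rank one; since $X$ is regular of dimension two, hence locally factorial, such a sheaf is a line bundle. Using $\operatorname{Pic}(X)=\operatorname{Pic}(\mathbb{P}^1_{\mathbb{Z}})\cong\mathbb{Z}$ I can then write $\mathcal{F}\cong\mathcal{O}_X(a)$.

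Next I would analyse the quotient $\mathcal{Q}:=\mathcal{E}/\mathcal{F}$, a torsion-free sheaf of rank one. Its reflexive hull $\mathcal{Q}^{\vee\vee}$ is again reflexive of rank one, hence a line bundle $\mathcal{O}_X(b)$, and the canonical map $\mathcal{Q}\hookrightarrow\mathcal{Q}^{\vee\vee}$ is injective and an isomorphism away from a closed set of codimension at least two. Twisting down, $\mathcal{Q}(-b)\hookrightarrow\mathcal{O}_X$ exhibits $\mathcal{Q}\cong\mathcal{I}_Z(b)$ for an ideal sheaf $\mathcal{I}_Z$ whose cosupport $Z$ has codimension at least two, or is empty. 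This already produces the sequence \eqref{exseq1}; it remains to prove that $Z$ is a local complete intersection and that $a\ge b$.

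The main obstacle is the local complete intersection property, which is exactly where local freeness of $\mathcal{E}$ is used. I would work locally at a point $x\in Z$, so that $R=\mathcal{O}_{X,x}$ is a regular local ring of dimension two, the twists trivialise, and the sequence reads $0\to R\to E\to I\to 0$ with $E$ free and $I$ an ideal of height two. Dualising with $\operatorname{Hom}_R(-,R)$ and using that $E$ is free yields an exact sequence $0\to R\to R^2\to R\to\operatorname{Ext}^1_R(I,R)\to 0$, where I have used $\operatorname{Hom}_R(I,R)\cong R$ for a height-two ideal in a regular local ring. As $R/I$ has dimension zero it is Cohen-Macaulay, and $\operatorname{Ext}^1_R(I,R)\cong\operatorname{Ext}^2_R(R/I,R)$ is its canonical module $\omega_{R/I}$. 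The displayed sequence forces $\omega_{R/I}$ to be cyclic, so $R/I$ is Gorenstein, and a Gorenstein ideal of height two in a regular local ring is a complete intersection. Hence $Z$ is a local complete intersection of codimension two.

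Finally, for the ordering $a\ge b$ I would restrict \eqref{exseq1} to the generic fibre. Being a finite set of closed points, $Z$ lies in the special fibres and is disjoint from $X_0$, so $\mathcal{I}_Z$ restricts to $\mathcal{O}_{X_0}$ there. Restriction to $X_0$ is base change along the flat localisation $\operatorname{Spec}\mathbb{Q}\to\operatorname{Spec}\mathbb{Z}$, hence exact and torsion-free-preserving, so $\mathcal{F}_0$ remains saturated in $\mathcal{E}_0$ and must equal the larger summand $\mathcal{O}_{X_0}(a_0)$. Therefore $a=a_0\ge b_0=b$, which completes the argument.
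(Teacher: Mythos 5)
Your proof follows essentially the same route as the paper: split $\mathcal{E}_0$ on the generic fibre by Grothendieck's theorem, pull the larger summand back to a saturated, reflexive (hence invertible) subsheaf via Proposition~\ref{subsh}, identify it with $\mathcal{O}_X(a)$ through $\operatorname{Pic}(X)\cong\operatorname{Pic}(X_0)\cong\mathbb{Z}$, recognise the torsion-free rank-one quotient as a twisted ideal sheaf, and read off $a\geq b$ on the generic fibre. The one place you go beyond the paper is that you actually \emph{prove} the local-complete-intersection property of $Z$ (dualising the local sequence to show $\operatorname{Ext}^1_R(I,R)\cong\omega_{R/I}$ is cyclic, hence $R/I$ Gorenstein, hence $I$ a complete intersection by Serre's theorem on height-two Gorenstein ideals), a step the paper simply asserts as a consequence of torsion-freeness; your argument there is correct and supplies exactly the point where local freeness of $\mathcal{E}$ is used.
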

\begin{proof}
Using Grothendieck's theorem for locally free sheaves on the generic fiber $\mathbb{P}^1_{\mathbb{Q}}$,  we can write $\mathcal{E}_0=\mathcal{O}_{X_0}(a)\oplus\mathcal{O}_{X_0}(b)$ for two integers $a,b$ such that $a\geq b$, see for example (\cite[Theorem 4.1]{haze}).\\ 
We find that $\mathcal{O}_{X_0}(a)$ is a saturated subsheaf of rank one in $\mathcal{E}_0$. Thus we get a one-dimensional vector subspace $U:=\gamma_{\mathcal{E}_0}(\mathcal{O}_{X_0}(a))$ in $V:=\gamma_{\mathcal{E}_0}(\mathcal{E}_{0})$ using the previous proposition (\ref{subsh}). But via $i_0: X_0\rightarrow X$ the generic point $\eta \in X_0$ is also the generic point of $X$ and we have $K=\mathcal{O}_{X,\eta}=\mathcal{O}_{X_0,\eta}$ and $\gamma_{\mathcal{E}}(\mathcal{E})=V$.\\ 
Now we define $\mathcal{L}:=\gamma_{\mathcal{E}}^{-1}(U)$, then $\mathcal{L}$ is a saturated subsheaf of rank one in $\mathcal{E}$. As $\mathcal{E}$ is reflexive we see that $\mathcal{L}$ is also reflexive, see \cite[Lemma 1.1.16.]{oko}, and hence $\mathcal{L}$ is a line bundle as $X$ is a regular scheme of dimension two. Since $\mathcal{L}$ is saturated $\mathcal{Q}:=\mathcal{E}/\mathcal{L}$ is torsion free. This gives the exact sequence 
\begin{equation*}\begin{CD}
0 @>>> \mathcal{L} @>>> \mathcal{E} @>>> \mathcal{Q} @>>> 0.
\end{CD}\end{equation*}
Restricting to the generic fiber we see that we must have $\mathcal{L}_0\cong \mathcal{O}_{X_0}(a)$. But in this case the restriction map defines an isomorphism $\mathbb{Z}\cong Pic(X)\stackrel{\sim}{\longrightarrow} Pic(X_0)\cong \mathbb{Z}$ since all special fibers $X_p$ are principal. We can conclude $\mathcal{L}\cong\mathcal{O}_X(a)$.  Since $\mathcal{Q}$ is torsion free, there is a local complete intersection $Z$ of codimension two in $X$ and some $n\in \mathbb{Z}$ such that $\mathcal{Q}\cong\mathcal{I}_Z(n)$. Looking again at the generic fiber, $\mathcal{E}_0/\mathcal{O}_{X_0}(a)\cong \mathcal{O}_{X_0}(b)$, we see that we must have $n=b$.
\end{proof}
This shows that every locally free sheaf $\mathcal{E}$ of rank two on $X$ defines a triple $(a,b,Z)$ with $a,b\in \mathbb{Z}$ such that $a\geq b$ and $Z$ a local complete intersection of codimension two in $X$.\\ 
Now we are interested in the inverse problem: given such a triple $(a,b,Z)$ is there a locally free sheaf $\mathcal{E}$ of rank two sitting in an exact sequence ($\ref{exseq1}$)?\\
These sequences are classified by $Ext^1_{\mathcal{O}_X}(\mathcal{I}_Z(b),\mathcal{O}_X(a))$. Using the local-to-global spectral sequence we get a sequence
\begin{equation*}\begin{CD}
0 @>>> H^1(X,\mathcal{H}om_{\mathcal{O}_X}(\mathcal{I}_Z(b),\mathcal{O}_X(a))) @>>> Ext^1_{\mathcal{O}_X}(\mathcal{I}_Z(b),\mathcal{O}_X(a))\\ @>>> H^0(X,\mathcal{E}xt^1_{\mathcal{O}_X}(\mathcal{I}_Z(b),\mathcal{O}_X(a))) @>>> H^2(X,\mathcal{H}om_{\mathcal{O}_X}(\mathcal{I}_Z(b),\mathcal{O}_X(a))).
\end{CD}\end{equation*}
\begin{lem}\label{compl}
Let $Z$ be a local complete intersection of codimension two in $X$, then we have $\mathcal{E}xt^i_{\mathcal{O}_X}(\mathcal{O}_Z,\mathcal{O}_X)=0$ for $i=0,1$ and $\mathcal{E}xt^2_{\mathcal{O}_X}(\mathcal{O}_Z,\mathcal{O}_X)\cong\mathcal{O}_Z$. 
\end{lem}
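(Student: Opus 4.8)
The plan is to reduce the statement to a purely local computation and then feed the Koszul complex of a local regular sequence into it. Since the formation of the sheaves $\mathcal{E}xt^i_{\mathcal{O}_X}(\mathcal{O}_Z,\mathcal{O}_X)$ commutes with restriction to Zariski opens, it suffices to work on an affine open $U=\operatorname{Spec}A$ of $X$ on which $\mathcal{I}_Z$ is generated by a regular sequence $(f,g)$; such a $U$ exists around every point of $Z$ precisely because $Z$ is a local complete intersection of codimension two. On $U$ the Koszul complex attached to $(f,g)$,
\[
0 \longrightarrow \mathcal{O}_U \longrightarrow \mathcal{O}_U^{\oplus 2} \longrightarrow \mathcal{O}_U \longrightarrow \mathcal{O}_Z|_U \longrightarrow 0,
\]
with second map $(p,q)\mapsto fp+gq$ and first map $h\mapsto(-gh,fh)$, is a finite locally free resolution of $\mathcal{O}_Z|_U$, again because $(f,g)$ is a regular sequence.

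First I would dualize the truncated resolution by applying $\mathcal{H}om_{\mathcal{O}_U}(-,\mathcal{O}_U)$, obtaining the complex
\[
\mathcal{O}_U \longrightarrow \mathcal{O}_U^{\oplus 2} \longrightarrow \mathcal{O}_U ,
\]
with first map $h\mapsto(fh,gh)$ and second map $(p,q)\mapsto -gp+fq$, whose cohomology in degrees $0,1,2$ computes the $\mathcal{E}xt^i$. The degree-zero term vanishes because $f$ is a nonzerodivisor, so the first map is injective; conceptually this is just the statement that there are no nonzero maps from the torsion sheaf $\mathcal{O}_Z$ to the torsion-free sheaf $\mathcal{O}_X$ on the integral scheme $X$. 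For degree one I would invoke the self-duality of the Koszul complex: the dualized complex is again the Koszul complex of the regular sequence $(f,g)$ up to sign, so its middle cohomology vanishes (the only syzygies of a regular sequence are the trivial Koszul ones), giving $\mathcal{E}xt^1=0$. In degree two the cohomology is the cokernel of the last map, namely $\mathcal{O}_U/(f,g)=\mathcal{O}_Z|_U$.

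The remaining point, and the one I expect to require the most care, is the globalization of the degree-two isomorphism. The vanishing statements for $i=0,1$ are local conditions and glue immediately. For $i=2$ the local computation only shows that $\mathcal{E}xt^2_{\mathcal{O}_X}(\mathcal{O}_Z,\mathcal{O}_X)$ is an invertible $\mathcal{O}_Z$-module, i.e. a line bundle on $Z$: changing the local regular sequence alters the identification with $\mathcal{O}_Z$ by a unit, so a priori the fundamental local isomorphism yields only $\det\mathcal{N}_{Z/X}$ rather than $\mathcal{O}_Z$. To finish I would use that $Z$ has codimension two in the two-dimensional scheme $X$, hence is zero-dimensional; a zero-dimensional noetherian scheme is a finite disjoint union of spectra of local rings, so its Picard group is trivial and every line bundle on it is isomorphic to $\mathcal{O}_Z$. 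This forces $\mathcal{E}xt^2_{\mathcal{O}_X}(\mathcal{O}_Z,\mathcal{O}_X)\cong\mathcal{O}_Z$ globally, completing the argument.
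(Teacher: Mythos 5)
Your proof is correct and follows essentially the same route as the paper: a local Koszul-complex computation for the vanishing in degrees $0,1$ and the identification of $\mathcal{E}xt^2$ with a line bundle on $Z$ (the fundamental local isomorphism), trivialized globally because $Z$ is zero-dimensional. You merely carry out explicitly the computation and the glueing-up-to-units step that the paper delegates to references.
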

\begin{proof}
Since $Z$ is a local complete intersection, $\mathcal{I}_Z$ is locally generated by a regular sequence. For $i=0,1$ this lemma is a computation using the Koszul complex of the regular sequence, see for example \cite[V.3,p.690]{grif}. For $i=2$ we have the local fundamental isomorphism, see for example (\cite[Theorem I.5.1.1.]{oko}):
\begin{center} 
$\mathcal{E}xt_{\mathcal{O}_X}^2(\mathcal{O}_Z,\mathcal{O}_X)\cong \mathcal{H}om_{\mathcal{O}_Z}(det(\mathcal{I}_Z/\mathcal{I}_Z^2),\mathcal{O}_Z)$. 
\end{center}
We can choose an isomorphism $det(\mathcal{I}_Z/\mathcal{I}_Z^2)\cong\mathcal{O}_Z$, since $Z$ is a zero-dimensional scheme. This shows that we have $\mathcal{E}xt^2_{\mathcal{O}_X}(\mathcal{O}_Z,\mathcal{O}_X)\cong\mathcal{O}_Z$.
\end{proof}
\begin{cor}
We have $H^i(X,\mathcal{H}om_{\mathcal{O}_X}(\mathcal{I}_Z(b),\mathcal{O}_X(a))=0$ for $i=1,2$.
\end{cor}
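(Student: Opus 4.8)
The plan is to identify the sheaf $\mathcal{H}om_{\mathcal{O}_X}(\mathcal{I}_Z(b),\mathcal{O}_X(a))$ explicitly and then reduce the statement to the vanishing of the cohomology of a single line bundle on $\mathbb{P}^1_{\mathbb{Z}}$. Since $\mathcal{O}_X(b)$ is invertible, there is a natural isomorphism
\[
\mathcal{H}om_{\mathcal{O}_X}(\mathcal{I}_Z(b),\mathcal{O}_X(a))\cong \mathcal{H}om_{\mathcal{O}_X}(\mathcal{I}_Z,\mathcal{O}_X)\otimes\mathcal{O}_X(a-b),
\]
so everything comes down to understanding the dual $\mathcal{H}om_{\mathcal{O}_X}(\mathcal{I}_Z,\mathcal{O}_X)$.

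To compute this dual I would apply $\mathcal{H}om_{\mathcal{O}_X}(-,\mathcal{O}_X)$ to the structure sequence $0\to\mathcal{I}_Z\to\mathcal{O}_X\to\mathcal{O}_Z\to 0$ and read off the resulting long exact sequence of local $\mathcal{E}xt$-sheaves. By Lemma \ref{compl} we have $\mathcal{H}om_{\mathcal{O}_X}(\mathcal{O}_Z,\mathcal{O}_X)=\mathcal{E}xt^1_{\mathcal{O}_X}(\mathcal{O}_Z,\mathcal{O}_X)=0$, so the relevant stretch of the sequence collapses to $0\to\mathcal{O}_X\to\mathcal{H}om_{\mathcal{O}_X}(\mathcal{I}_Z,\mathcal{O}_X)\to 0$, forcing the natural map $\mathcal{O}_X\to\mathcal{H}om_{\mathcal{O}_X}(\mathcal{I}_Z,\mathcal{O}_X)$ to be an isomorphism. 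Combined with the displayed formula this yields
\[
\mathcal{H}om_{\mathcal{O}_X}(\mathcal{I}_Z(b),\mathcal{O}_X(a))\cong\mathcal{O}_X(a-b).
\]

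The corollary then reduces to the claim that $H^i(X,\mathcal{O}_X(a-b))=0$ for $i=1,2$. The vanishing for $i=2$ holds for every twist, because $X=\mathbb{P}^1_{\mathbb{Z}}$ is covered by two affine opens and so \v{C}ech cohomology against this cover computes all higher cohomology and vanishes in degrees $\geq 2$ (equivalently, Grothendieck vanishing for the relative curve $f\colon X\to Y$ kills $H^i$ above the fibre dimension). For $i=1$ I would invoke the standard computation of the cohomology of $\mathcal{O}(n)$ on the projective line over a base ring, which gives $H^1(\mathbb{P}^1_{\mathbb{Z}},\mathcal{O}(n))=0$ as soon as $n\geq -1$.

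I expect the only real content to be the identification $\mathcal{H}om_{\mathcal{O}_X}(\mathcal{I}_Z,\mathcal{O}_X)\cong\mathcal{O}_X$, which is immediate from Lemma \ref{compl}; the cohomology vanishings are entirely standard. The place where the hypotheses genuinely enter is the inequality $a\geq b$ coming from the previous lemma: it guarantees $a-b\geq 0\geq -1$, which is exactly the range in which $H^1$ of the relevant line bundle vanishes, so this inequality is used in an essential way.
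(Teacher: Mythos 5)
Your proof is correct and follows essentially the same route as the paper: both identify $\mathcal{H}om_{\mathcal{O}_X}(\mathcal{I}_Z(b),\mathcal{O}_X(a))\cong\mathcal{O}_X(a-b)$ by dualizing the structure sequence of $Z$ and invoking Lemma \ref{compl}, and both use $a-b\geq 0$ for the $H^1$ vanishing. The only divergence is the $i=2$ case, where the paper argues fibrewise ($H^2(X_p,\mathcal{O}_{X_p}(a-b))=0$ since $\dim X_p=1$) and then applies Grauert's base change, whereas you kill $H^2$ directly via the two-element affine cover of $\mathbb{P}^1_{\mathbb{Z}}$ --- a more elementary argument that works for any quasi-coherent sheaf and any twist.
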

\begin{proof}
Applying $\mathcal{H}om_{\mathcal{O}_X}(-,\mathcal{O}_X)$ to the exact sequence
\begin{equation*}\begin{CD}
0 @>>> \mathcal{I}_Z @>>> \mathcal{O}_X @>>> \mathcal{O}_{Z} @>>> 0
\end{CD}\end{equation*}
and using the previous lemma shows that we have an isomorphism 
\begin{center}
$\mathcal{H}om_{\mathcal{O}_X}(\mathcal{I}_Z,\mathcal{O}_X)\cong \mathcal{H}om_{\mathcal{O}_X}(\mathcal{O}_X,\mathcal{O}_X)=\mathcal{O}_X$. 
\end{center}
This implies $\mathcal{H}om_{\mathcal{O}_X}(\mathcal{I}_Z(b),\mathcal{O}_X(a))\cong\mathcal{O}_X(a-b)$. As $a-b\geq 0$ we get the desired result for $i=1$. For $i=2$ we use that $H^2(X_p,\mathcal{O}_{X_p}(a-b))=0$ for any $p\in Y$ as $dim(X_p)=1$. Using Grauert's base change theorem, see (\cite[Corollary III.12.9]{hart}), yields the desired result.
\end{proof}
These facts lead us to the following:
\begin{lem}[\normalfont{\cite[Corollary 2.10.]{fried}}]
Given a triple $(a,b,Z)$, then there exists a locally free sheaf $\mathcal{E}$ of rank two on $X$ sitting in an exact sequence
\begin{equation*}\begin{CD}
0 @>>> \mathcal{O}_X(a) @>>> \mathcal{E} @>>> \mathcal{I}_Z(b) @>>> 0.
\end{CD}\end{equation*} 
\end{lem}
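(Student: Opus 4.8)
The plan is to identify the classifying group $Ext^1_{\mathcal{O}_X}(\mathcal{I}_Z(b),\mathcal{O}_X(a))$ with global sections of a local $\mathcal{E}xt$-sheaf supported on $Z$, then single out the extension class coming from a nowhere-vanishing section, and finally use the local complete intersection structure of $Z$ to check that the resulting extension is locally free. First I would exploit the local-to-global spectral sequence displayed above: the preceding corollary gives $H^1(X,\mathcal{H}om_{\mathcal{O}_X}(\mathcal{I}_Z(b),\mathcal{O}_X(a)))=0$ and $H^2(X,\mathcal{H}om_{\mathcal{O}_X}(\mathcal{I}_Z(b),\mathcal{O}_X(a)))=0$, so the edge map yields an isomorphism $Ext^1_{\mathcal{O}_X}(\mathcal{I}_Z(b),\mathcal{O}_X(a)) \xrightarrow{\sim} H^0(X,\mathcal{E}xt^1_{\mathcal{O}_X}(\mathcal{I}_Z(b),\mathcal{O}_X(a)))$. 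Thus every extension class is completely determined by, and can be prescribed through, its local data along $Z$.

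Next I would compute this local $\mathcal{E}xt$-sheaf. Applying $\mathcal{H}om_{\mathcal{O}_X}(-,\mathcal{O}_X)$ to the structure sequence $0\to\mathcal{I}_Z\to\mathcal{O}_X\to\mathcal{O}_Z\to 0$ and using that $\mathcal{O}_X$ is locally free, so its higher $\mathcal{E}xt$-sheaves into $\mathcal{O}_X$ vanish, together with Lemma \ref{compl}, the long exact sequence of $\mathcal{E}xt$-sheaves collapses to an isomorphism $\mathcal{E}xt^1_{\mathcal{O}_X}(\mathcal{I}_Z,\mathcal{O}_X)\cong\mathcal{E}xt^2_{\mathcal{O}_X}(\mathcal{O}_Z,\mathcal{O}_X)\cong\mathcal{O}_Z$. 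After the twist by $\mathcal{O}_X(a-b)$, which is trivial on the zero-dimensional scheme $Z$, I obtain $\mathcal{E}xt^1_{\mathcal{O}_X}(\mathcal{I}_Z(b),\mathcal{O}_X(a))\cong\mathcal{O}_Z$, and hence $Ext^1_{\mathcal{O}_X}(\mathcal{I}_Z(b),\mathcal{O}_X(a))\cong H^0(X,\mathcal{O}_Z)$. I would then take for $\mathcal{E}$ the extension whose class $\xi$ corresponds to the section $1\in H^0(X,\mathcal{O}_Z)$, that is, to a generator of the stalk of $\mathcal{O}_Z$ at every point of $Z$.

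It remains to verify that this particular $\mathcal{E}$ is locally free, which is the crux of the argument (this is Serre's construction, \cite[Corollary 2.10.]{fried}). Away from $Z$ we have $\mathcal{I}_Z|_{X\setminus Z}=\mathcal{O}_X$, so the sequence splits locally and $\mathcal{E}$ is a direct sum of two line bundles. At a point $z\in Z$ the local complete intersection hypothesis lets me write $\mathcal{I}_{Z,z}=(f,g)$ for a regular sequence, whose Koszul complex furnishes the free resolution $0\to\mathcal{O}_X\xrightarrow{(-g,f)}\mathcal{O}_X^{2}\xrightarrow{(f,g)}\mathcal{I}_Z\to 0$ near $z$. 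The connecting homomorphism identifies the class of this very resolution with the chosen generator of the stalk $(\mathcal{E}xt^1_{\mathcal{O}_X}(\mathcal{I}_Z,\mathcal{O}_X))_z\cong\mathcal{O}_{Z,z}$, so locally $\mathcal{E}$ is isomorphic to the middle term $\mathcal{O}_X^{2}$ and is therefore free of rank two. The main obstacle is precisely this last local matching: one must confirm that the abstractly chosen class $\xi$ restricts at each point of $Z$ to the Koszul generator rather than to a multiple of it, since it is exactly the fact that $\xi$ generates $\mathcal{O}_Z$ stalkwise that forces a locally free extension instead of a merely torsion-free one, and this is what the choice $\xi\leftrightarrow 1$ secures.
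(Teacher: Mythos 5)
Your argument is correct and follows exactly the route the paper takes: it relies on the vanishing of $H^i(X,\mathcal{H}om_{\mathcal{O}_X}(\mathcal{I}_Z(b),\mathcal{O}_X(a)))$ for $i=1,2$, the resulting identification $Ext^1_{\mathcal{O}_X}(\mathcal{I}_Z(b),\mathcal{O}_X(a))\cong H^0(Z,\mathcal{O}_Z)$, and the criterion that a class which is a unit in every stalk $\mathcal{O}_{Z,z}$ yields a locally free extension (the Serre construction, which the paper cites from Friedman rather than reproving). You have simply unpacked the cited local Koszul-complex verification; the only nitpick is that the class need not restrict to \emph{the} Koszul generator, merely to a unit multiple of it, which is exactly what the choice $\xi\leftrightarrow 1$ guarantees.
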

\begin{rem}
\normalfont Since $\mathcal{O}_X$ is locally free, the proof also shows that there is an isomorphism $\mathcal{E}xt^1_{\mathcal{O}_X}(\mathcal{I}_Z,\mathcal{O}_X)\cong\mathcal{O}_Z$. Indeed, as $\mathcal{O}_X$ is locally free, the long exact sequence shows that we have an isomorphism $\mathcal{E}xt^1_{\mathcal{O}_X}(\mathcal{I}_Z,\mathcal{O}_X)\cong\mathcal{E}xt^2_{\mathcal{O}_X}(\mathcal{O}_Z,\mathcal{O}_X)$ and the latter is isomorphic to $\mathcal{O}_Z$ by (\ref{compl}). As $Z$ has codimension two in $X$ we conclude that there is an isomorphism $H^0(X,\mathcal{E}xt^1_{\mathcal{O}_X}(\mathcal{I}_Z(b),\mathcal{O}_X(a)))\cong H^0(Z,\mathcal{O}_Z)$.
\end{rem}
The question remains if we can say which extensions give rise to a locally free sheaf.\\ 
Using the local-to-global spectral sequence we get an isomorphism 
\begin{center}
$Ext^1_{\mathcal{O}_X}(\mathcal{I}_Z(b),\mathcal{O}_X(a))\cong H^0(Z,\mathcal{O}_Z)$.
\end{center}
Then by (\cite[Theorem 2.8.]{fried}) an extension $\xi$ whose image under this ismorphism is a unit in $\mathcal{O}_Z$, that is a unit in $\mathcal{O}_{Z,z}$ for every $z\in Z$, will give rise to a locally free sheaf $\mathcal{E}$.
\subsection{Ideal sheaves on the fibers}
Given a local complete intersection $Z$ of codimension two in $X$, then there is the associated ideal sheaf $\mathcal{I}_Z$ in $\mathcal{O}_X$. We want to understand how $\mathcal{I}_Z$ behaves on the fibers of $f$, that is: what is $\iota_p^{*}\mathcal{I}_Z$? To do this we note that $f(Z)\subset Y$ consits of finitely many primes, that is $f(Z)=\left\{p_1,\ldots,p_r\right\}$ and we consider the cases $p\in f(Z)$ and $p\notin f(Z)$ separately.
\begin{lem}\label{idstr}
Let $p\in Y$ be such that $p\notin f(Z)$, then we have $\iota_p^{*}\mathcal{I}_Z= \mathcal{O}_{X_p}$.
\end{lem}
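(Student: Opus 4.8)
The plan is to exploit the elementary fact that, away from $Z$, the ideal sheaf $\mathcal{I}_Z$ agrees with the full structure sheaf $\mathcal{O}_X$, and then to observe that the hypothesis $p\notin f(Z)$ forces the entire fiber $X_p$ to sit inside this locus. Note that the local complete intersection assumption on $Z$ plays no role for this lemma; it is only needed in the complementary case $p\in f(Z)$. All that is used here is that $Z$ is a closed subscheme of $X$.

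First I would record the purely set-theoretic consequence of the hypothesis. Every point $z\in Z$ satisfies $f(z)\in f(Z)=\{p_1,\ldots,p_r\}$, while every point of $X_p=f^{-1}(p)$ maps to $p$ under $f$. Since $p\notin f(Z)$, no point can lie in both, and therefore $Z\cap X_p=\emptyset$. In particular the closed subscheme $X_p$ is contained, as a topological subspace, in the open subscheme $U:=X\setminus Z$, so that the closed immersion factors as $X_p\xrightarrow{\,\iota_p'\,}U\xrightarrow{\,j\,}X$ with $j$ the open immersion.

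Next I would restrict the defining exact sequence $0\to\mathcal{I}_Z\to\mathcal{O}_X\to\mathcal{O}_Z\to 0$ to $U$. As $Z\cap U=\emptyset$ we have $\mathcal{O}_Z|_U=0$, and exactness shows that the inclusion induces an isomorphism $\mathcal{I}_Z|_U\cong\mathcal{O}_U$. Pulling this back along $\iota_p'$ then gives $\iota_p^{*}\mathcal{I}_Z=(\iota_p')^{*}(j^{*}\mathcal{I}_Z)=(\iota_p')^{*}(\mathcal{I}_Z|_U)\cong(\iota_p')^{*}\mathcal{O}_U=\mathcal{O}_{X_p}$, which is the claim.

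The only real content is the first step, namely translating $p\notin f(Z)$ into the disjointness $Z\cap X_p=\emptyset$; everything afterwards is formal, and I do not expect a genuine obstacle. As a sanity check, one could alternatively apply the underived pullback $\iota_p^{*}$ directly to the exact sequence and invoke the long exact $\operatorname{Tor}^{\mathcal{O}_X}_{\bullet}(-,\mathcal{O}_{X_p})$ sequence: both $\iota_p^{*}\mathcal{O}_Z=\mathcal{O}_Z\otimes_{\mathcal{O}_X}\mathcal{O}_{X_p}$ and $\operatorname{Tor}^{\mathcal{O}_X}_1(\mathcal{O}_Z,\mathcal{O}_{X_p})$ are supported on $Z\cap X_p=\emptyset$ and hence vanish, yielding $\iota_p^{*}\mathcal{I}_Z\cong\iota_p^{*}\mathcal{O}_X=\mathcal{O}_{X_p}$ once more.
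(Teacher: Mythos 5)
Your proposal is correct, and in fact it contains two complete arguments. Your primary route --- factoring the closed immersion as $X_p \to U \to X$ with $U = X\setminus Z$, observing that the inclusion $\mathcal{I}_Z \hookrightarrow \mathcal{O}_X$ restricts to an isomorphism $\mathcal{I}_Z|_U \cong \mathcal{O}_U$ because $\mathcal{O}_Z|_U = 0$, and then pulling back --- is genuinely different in presentation from the paper's: the paper applies $\iota_p^{*}$ directly to the sequence $0\to\mathcal{I}_Z\to\mathcal{O}_X\to\mathcal{O}_Z\to 0$, obtains the four-term exact sequence involving $\mathcal{T}or_1^{\iota_p^{-1}\mathcal{O}_X}(\iota_p^{-1}\mathcal{O}_Z,\mathcal{O}_{X_p})$, and kills both that Tor sheaf and $\iota_p^{*}\mathcal{O}_Z$ by checking that their stalks vanish at every point of $X_p$ (since $\iota_p(x)\notin Z$). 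Your ``sanity check'' at the end is essentially that argument restated. Both hinge on the same translation of $p\notin f(Z)$ into $Z\cap X_p=\emptyset$, which you justify correctly. Your main route is slightly more economical in that it avoids invoking Tor at all; the paper's route has the advantage of setting up the exact sequence (labelled (\ref{pullb}) in the text) that is reused immediately afterwards in the case $p\in f(Z)$, where the Tor term genuinely survives --- so the Tor-based bookkeeping is not wasted effort in the paper's organization. You are also right that the local complete intersection hypothesis is irrelevant here and only matters in the complementary lemma.
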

\begin{proof}
Using the exact sequence
\begin{equation*}\begin{CD}
0 @>>> \mathcal{I}_Z @>>> \mathcal{O}_X @>>> \mathcal{O}_{Z} @>>> 0
\end{CD}\end{equation*}
and the definition of $\iota_p^{*}$ we get the exact sequence
\begin{equation}\begin{CD}\label{pullb}
0 @>>> \mathcal{T}or_1^{\iota_p^{-1}\mathcal{O}_X}(\iota_p^{-1}\mathcal{O}_Z,\mathcal{O}_{X_p}) @>>> \iota_p^{*}\mathcal{I}_Z @>>> \mathcal{O}_{X_p} @>>> \iota_p^{*}\mathcal{O}_Z @>>> 0.
\end{CD}\end{equation}
Now $p\notin f(Z)$ is equivalent to $Z\cap X_p=\emptyset$, so that for every $x\in X_p$ we have $\iota_p(x)\notin Z$. This implies $(\iota_p^{-1}\mathcal{O}_Z)_x=\mathcal{O}_{Z,\iota_p(x)}=0$ for all $x\in X_p$.\\
So the $\mathcal{O}_{X_p}$-modules $\iota_p^{*}\mathcal{O}_Z=\iota_p^{-1}\mathcal{O}_Z\otimes_{\iota_p^{-1}\mathcal{O}_X}\mathcal{O}_{X_p}$ and $\mathcal{T}or_1^{\iota_p^{-1}\mathcal{O}_X}(\iota_p^{-1}\mathcal{O}_Z,\mathcal{O}_{X_p})$ vanish. But then the exact sequence shows that we have $\iota_p^{*}\mathcal{I}_Z=\mathcal{O}_{X_p}$.
\end{proof}

\begin{lem}\label{seqid}
Let $p\in Y$ be such that $p\in f(Z)$, then there is an $n_p\in \mathbb{Z}_{>0}$ and an exact sequence
\begin{equation*}\begin{CD}
0 @>>> \mathcal{T}or_1^{\iota_p^{-1}\mathcal{O}_X}(\iota_p^{-1}\mathcal{O}_Z,\mathcal{O}_{X_p}) @>>> \iota_p^{*}\mathcal{I}_Z @>>> \mathcal{O}_{X_p}(-n_p) @>>> 0.
\end{CD}\end{equation*} 
\end{lem}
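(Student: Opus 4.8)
The four-term exact sequence (\ref{pullb}) was derived for an arbitrary $p\in Y$ simply by applying $\iota_p^{*}$ to the structure sequence $0\to\mathcal{I}_Z\to\mathcal{O}_X\to\mathcal{O}_Z\to 0$, so the plan is to start from it again here:
\[
0 \longrightarrow \mathcal{T}or_1^{\iota_p^{-1}\mathcal{O}_X}(\iota_p^{-1}\mathcal{O}_Z,\mathcal{O}_{X_p}) \longrightarrow \iota_p^{*}\mathcal{I}_Z \longrightarrow \mathcal{O}_{X_p} \longrightarrow \iota_p^{*}\mathcal{O}_Z \longrightarrow 0 .
\]
The difference with Lemma~\ref{idstr} is that now $p\in f(Z)$, so the scheme-theoretic fiber intersection $Z_p:=Z\times_X X_p$ is non-empty and the rightmost term no longer vanishes; in fact $\iota_p^{*}\mathcal{O}_Z=\mathcal{O}_{Z_p}$, the structure sheaf of $Z_p$ viewed as a closed subscheme of $X_p=\mathbb{P}^1_{\mathbb{F}_p}$.

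Next I would break the four-term sequence at its image. The map $\mathcal{O}_{X_p}\to\iota_p^{*}\mathcal{O}_Z=\mathcal{O}_{Z_p}$ is the pullback of the quotient $\mathcal{O}_X\to\mathcal{O}_Z$, hence the canonical surjection defining $Z_p$ inside $X_p$; its kernel is by definition the ideal sheaf $\mathcal{I}_{Z_p}$ of $Z_p$ in $X_p$. By exactness the image of $\iota_p^{*}\mathcal{I}_Z\to\mathcal{O}_{X_p}$ coincides with this kernel, while the kernel of that same map is the $\mathcal{T}or_1$ term. The sequence therefore collapses to
\[
0 \longrightarrow \mathcal{T}or_1^{\iota_p^{-1}\mathcal{O}_X}(\iota_p^{-1}\mathcal{O}_Z,\mathcal{O}_{X_p}) \longrightarrow \iota_p^{*}\mathcal{I}_Z \longrightarrow \mathcal{I}_{Z_p} \longrightarrow 0 ,
\]
so it only remains to identify $\mathcal{I}_{Z_p}$ with $\mathcal{O}_{X_p}(-n_p)$ for a suitable $n_p\in\mathbb{Z}_{>0}$.

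This last identification is the substantive step, and the place where the hypothesis is really used. Since $Z$ has codimension two in the two-dimensional scheme $X$ it is zero-dimensional, so $Z_p\subset Z$ is a finite, non-empty, zero-dimensional closed subscheme of the smooth curve $X_p$; in particular it is a proper subscheme and cannot exhaust the fiber. On a regular one-dimensional scheme the local rings are discrete valuation rings, so the ideal of any such zero-dimensional subscheme is locally principal, i.e.\ $Z_p$ is an effective Cartier divisor and $\mathcal{I}_{Z_p}$ is invertible. Since $\mathrm{Pic}(\mathbb{P}^1_{\mathbb{F}_p})\cong\mathbb{Z}$ is generated by $\mathcal{O}_{X_p}(1)$ (\cite{hart}), I would conclude $\mathcal{I}_{Z_p}\cong\mathcal{O}_{X_p}(-n_p)$, where $n_p=\deg Z_p$ is the length of the effective divisor $Z_p$; as $Z_p\neq\emptyset$ precisely because $p\in f(Z)$, this degree is strictly positive, giving $n_p\in\mathbb{Z}_{>0}$. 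Feeding this isomorphism back into the collapsed sequence yields the assertion. The only point requiring genuine care, rather than formal manipulation, is the positivity and well-definedness of $n_p$, namely that $Z_p$ is a nonzero proper ideal and hence a genuine line bundle rather than all of $\mathcal{O}_{X_p}$; this is exactly what $\dim Z=0$ and the convention $f(Z)=\{p:Z_p\neq\emptyset\}$ guarantee, while the $\mathcal{T}or_1$ term plays no active role and is merely carried along.
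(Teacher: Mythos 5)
Your proposal is correct and follows essentially the same route as the paper: both split the four-term sequence (\ref{pullb}) at the image of $\iota_p^{*}\mathcal{I}_Z\to\mathcal{O}_{X_p}$ and identify that image, the ideal sheaf of the non-empty zero-dimensional fiber $Z_p$ in the curve $X_p$, with $\mathcal{O}_{X_p}(-n_p)$ for a positive $n_p$ equal to the degree (equivalently, the length of $\iota_p^{*}\mathcal{O}_Z$). You merely spell out the invertibility of that ideal sheaf and the use of $\mathrm{Pic}(\mathbb{P}^1_{\mathbb{F}_p})\cong\mathbb{Z}$, which the paper leaves implicit.
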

\begin{proof}
Since $p\in f(Z)$ is equivalent to $Z\cap X_p\neq \emptyset$ we know that $\mathcal{Q}:=\iota_p^{*}\mathcal{O}_Z$ is a nontrivial $\mathcal{O}_{X_p}$-module of finite length as it vanishes at the generic point of $X_p$. Define $n_p:=l_{\mathcal{O}_{X_p}}(\mathcal{Q})$, then we get the exact sequence:
\begin{equation*}\begin{CD}
0 @>>> \mathcal{O}_{X_p}(-n_p) @>>> \mathcal{O}_{X_p} @>>> \mathcal{Q} @>>> 0.
\end{CD}\end{equation*} 
Using the exactness of (\ref{pullb}) shows that we get the desired sequence.
\end{proof}

\subsection{Indecomposable locally free sheaves}
There is the following cohomological criterion to test if a locally free sheaf of rank two is decomposable:
\begin{lem}[\normalfont{\cite[Lemma 5.]{hanna}}]
A locally free sheaf $\mathcal{E}$ of rank two on $X$ is decomposable if and only if for all $n\in\mathbb{Z}$ the $\mathbb{Z}$-module $H^i(X,\mathcal{E}(n))$ is free $(i=0,1)$.
\end{lem}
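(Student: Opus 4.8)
The plan is to translate the freeness of the cohomology into a statement about the splitting type of $\mathcal{E}$ on the fibers $X_p$ of $f$, and then to identify constant splitting type with decomposability. The easy implication is that a decomposable $\mathcal{E}\cong\mathcal{O}_X(c)\oplus\mathcal{O}_X(d)$ has free cohomology: by additivity $H^i(X,\mathcal{E}(n))\cong H^i(X,\mathcal{O}_X(c+n))\oplus H^i(X,\mathcal{O}_X(d+n))$, and for every $m$ the modules $H^0(X,\mathcal{O}_X(m))$ (the degree-$m$ part of $\mathbb{Z}[x_0,x_1]$) and $H^1(X,\mathcal{O}_X(m))$ (read off the standard \v{C}ech complex over $\mathbb{Z}$) are visibly free. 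In fact $H^0(X,\mathcal{E}(n))$ is free for any locally free $\mathcal{E}$, since it is a finitely generated $\mathbb{Z}$-module injecting into the $\mathbb{Q}$-vector space $H^0(X_0,\mathcal{E}(n)_0)$; so the real content of the criterion concerns $H^1$.

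For the converse I would use cohomology and base change. As $f$ is proper and $\mathcal{E}(n)$ is flat over $Y=\mathrm{Spec}(\mathbb{Z})$, there is a complex $K^\bullet=[K^0\xrightarrow{d}K^1]$ of finite free $\mathbb{Z}$-modules in degrees $0,1$ (the fibers being curves, $R^{\ge2}f_*=0$) computing $H^i(X_A,\mathcal{E}(n)_A)=H^i(K^\bullet\otimes_{\mathbb{Z}}A)$ universally. Then $H^1(X,\mathcal{E}(n))=\mathrm{coker}(d)$ is torsion-free, hence free, if and only if $\mathrm{rank}_{\mathbb{F}_p}(d\otimes\mathbb{F}_p)=\mathrm{rank}_{\mathbb{Q}}(d\otimes\mathbb{Q})$ for all $p$, i.e. if and only if $p\mapsto h^1(X_p,\mathcal{E}(n)_p)$ is constant on $Y$. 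Writing $\mathcal{E}_p\cong\mathcal{O}(a_p)\oplus\mathcal{O}(b_p)$ on $X_p\cong\mathbb{P}^1_{\kappa(p)}$, one has $h^1(X_p,\mathcal{E}(n)_p)=\max(0,-a_p-n-1)+\max(0,-b_p-n-1)$, whose values as $n$ ranges over $\mathbb{Z}$ determine the unordered pair $\{a_p,b_p\}$. Hence $H^1(X,\mathcal{E}(n))$ is free for all $n$ exactly when the splitting type $\{a_p,b_p\}$ is independent of $p$, necessarily equal to the generic type $\{a,b\}$ of (\ref{exseq1}).

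It then remains to show that constant splitting type is equivalent to $Z=\emptyset$, after which decomposability follows: if $Z=\emptyset$ then $\mathcal{I}_Z=\mathcal{O}_X$ and (\ref{exseq1}) is an extension of $\mathcal{O}_X(b)$ by $\mathcal{O}_X(a)$ with class in $\mathrm{Ext}^1_{\mathcal{O}_X}(\mathcal{O}_X(b),\mathcal{O}_X(a))\cong H^1(X,\mathcal{O}_X(a-b))=0$ (as $a-b\ge0$), so it splits. To control the splitting type at a prime $p\in f(Z)$ I would first record that $Z\cap X_0=\emptyset$ (so $f(Z)$ consists of closed points) and that $\det\mathcal{E}\cong\mathcal{O}_X(a+b)$ forces $a_p+b_p=a+b$ on every fiber. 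Restricting (\ref{exseq1}) to $X_p$ and using that the torsion $\mathrm{Tor}$-sheaf injects into the line bundle $\mathcal{O}_{X_p}(a)$ and therefore vanishes, I get $0\to\mathcal{O}_{X_p}(a)\to\mathcal{E}_p\to\iota_p^*\mathcal{I}_Z(b)\to0$; composing with the torsion-free quotient $\iota_p^*\mathcal{I}_Z(b)\twoheadrightarrow\mathcal{O}_{X_p}(b-n_p)$ of Lemma \ref{seqid} (with $n_p>0$) produces a saturated sub-line-bundle $\mathcal{K}=\ker(\mathcal{E}_p\to\mathcal{O}_{X_p}(b-n_p))$ of degree $a+n_p$. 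Thus $a_p\ge a+n_p>a$, the splitting type jumps, and constancy forces $f(Z)=\emptyset$, i.e. $Z=\emptyset$.

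The hard part is the converse, and the subtlety it hides is worth stating. One might hope to argue fiberwise: the torsion $H^0(\mathcal{O}_Z)$ appearing in $H^1(X,\mathcal{I}_Z(b+n))$ for $n\ll0$ should surface as torsion in $H^1(X,\mathcal{E}(n))$. But this can fail at any single $n$, because an extension of a free $\mathbb{Z}$-module by one with torsion may be free (as in $0\to\mathbb{Z}\xrightarrow{m}\mathbb{Z}\to\mathbb{Z}/m\to0$): the connecting maps in the long exact sequence can absorb the torsion. This is precisely why the criterion must be read across all twists $n$ at once, which the base-change complex and the splitting-type invariant handle cleanly, and why the decisive input is the degree computation giving the strict jump $a_p>a$ over the primes of $f(Z)$.
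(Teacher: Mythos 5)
Your proof is correct, but note that the paper does not actually prove this lemma: it is quoted from Hanna as a black box, so the comparison is necessarily with the surrounding material rather than with an in-paper proof. Your route splices together two things. First, the Grothendieck base-change complex $K^0\xrightarrow{d}K^1$ of finite free $\mathbb{Z}$-modules (legitimately truncated to two terms because $H^2$ vanishes on every one-dimensional fiber, so the top differential is fiberwise surjective, hence split surjective, and a free summand cancels) turns ``$H^1(X,\mathcal{E}(n))$ free for all $n$'' into ``$h^1(X_p,\mathcal{E}(n)_p)$ constant in $p$ for all $n$''; since the function $n\mapsto\max(0,-a_p-n-1)+\max(0,-b_p-n-1)$ recovers the unordered pair $\{a_p,b_p\}$, freeness of all twists is equivalent to constancy of the splitting type. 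Second, the equivalence of constant splitting type with $Z=\emptyset$, and of $Z=\emptyset$ with decomposability, is exactly the content of Lemma \ref{inde} and the theorem following it, whose fiberwise degree computation ($a_p\geq a+n_p>a$ over $p\in f(Z)$) you reuse, your weaker bound via a sub-line-bundle of degree $a+n_p$ being enough. The net effect is that Hanna's cohomological criterion and the paper's criterion ``indecomposable if and only if $Z\neq\emptyset$'' become a single statement with the base-change complex as the bridge, whereas the paper keeps them logically independent by citation; your version is self-contained within the paper's framework, and your closing observation --- that torsion in $H^1(X,\mathcal{I}_Z(b+n))$ can be absorbed by the connecting homomorphism at any single twist, which is why the criterion must range over all $n$ --- identifies precisely where a naive one-twist argument would fail.
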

We want to prove a criterion for decomposability of a locally free sheaf of rank two in terms of the associated local complete intersection. 
\begin{lem}\label{inde}
Let $\mathcal{E}$ be a locally free sheaf of rank two on $X$. If the local complete intersection in the associated triple $(a,b,Z)$ is not empty, then $\mathcal{E}$ is indecomposable.
\end{lem}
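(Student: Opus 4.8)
The plan is to use the cohomological criterion stated just above: to prove that $\mathcal{E}$ is indecomposable it suffices to exhibit a single twist $n$ for which the $\mathbb{Z}$-module $H^1(X,\mathcal{E}(n))$ fails to be free. First I would record the facts about line bundles on $X=\mathbb{P}^1_{\mathbb{Z}}$ that drive everything: pushing forward along $f$ and using that $\operatorname{Spec}(\mathbb{Z})$ is affine, every $H^i(X,\mathcal{O}_X(m))$ is a free $\mathbb{Z}$-module, one has $H^2(X,\mathcal{O}_X(m))=0$ for all $m$, and $H^1(X,\mathcal{O}_X(m))=0$ as soon as $m\geq -1$. Consequently all torsion is forced to come from the subscheme $Z$.

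Next I would locate the torsion at the level of the ideal sheaf. Since $Z$ has codimension two in the two-dimensional scheme $X$, it is zero-dimensional, and it is nonempty by hypothesis, so $f(Z)=\{p_1,\dots,p_r\}$ consists of nonzero primes and $H^0(Z,\mathcal{O}_Z(-1))$ is a nonzero finite abelian group. Twisting the structure sequence $0\to\mathcal{I}_Z\to\mathcal{O}_X\to\mathcal{O}_Z\to 0$ by $\mathcal{O}_X(-1)$ and passing to cohomology, the vanishings $H^0(X,\mathcal{O}_X(-1))=H^1(X,\mathcal{O}_X(-1))=0$ yield an isomorphism
\[
H^1(X,\mathcal{I}_Z(-1))\cong H^0(Z,\mathcal{O}_Z(-1)),
\]
so $H^1(X,\mathcal{I}_Z(-1))$ is a nonzero finite group, and in particular it has torsion.

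It remains to transport this torsion from $\mathcal{I}_Z$ to $\mathcal{E}$, and this is precisely where the hypothesis $a\geq b$ enters. I would twist the defining extension (\ref{exseq1}) by $\mathcal{O}_X(-1-b)$, obtaining
\[
0\longrightarrow \mathcal{O}_X(a-b-1)\longrightarrow \mathcal{E}(-1-b)\longrightarrow \mathcal{I}_Z(-1)\longrightarrow 0.
\]
Because $a\geq b$ we have $a-b-1\geq -1$, hence $H^1(X,\mathcal{O}_X(a-b-1))=0$; combined with $H^2(X,\mathcal{O}_X(a-b-1))=0$, the long exact cohomology sequence collapses to an isomorphism $H^1(X,\mathcal{E}(-1-b))\cong H^1(X,\mathcal{I}_Z(-1))$. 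Thus $H^1(X,\mathcal{E}(-1-b))$ contains torsion, so it is not a free $\mathbb{Z}$-module, and the cohomological criterion forces $\mathcal{E}$ to be indecomposable.

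I expect the only genuine subtlety to be the bookkeeping in the last step: the twist $-1-b$ is chosen exactly so that $b+(-1-b)=-1$ places the ideal sheaf in the range where its $H^1$ is pure torsion, while at the same time $a+(-1-b)=a-b-1$ remains $\geq -1$, so that the line-bundle term $\mathcal{O}_X(a-b-1)$ contributes neither $H^1$ nor $H^2$ to the sequence. Without $a\geq b$ these two constraints could be incompatible, a free $H^1(X,\mathcal{O}_X(a-b-1))$ might reappear, and the clean transfer of torsion could be obstructed; verifying that the given inequality makes the single twist work simultaneously on both ends is therefore the crux of the argument.
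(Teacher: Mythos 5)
Your proof is correct, but it takes a genuinely different route from the paper's. You invoke Hanna's cohomological criterion (decomposable $\Leftrightarrow$ all $H^i(X,\mathcal{E}(n))$ free) and produce torsion directly: twisting the defining extension by $\mathcal{O}_X(-1-b)$ is exactly the right normalization, since $a-b-1\geq -1$ kills both $H^1$ and $H^2$ of the line-bundle term while $H^1(X,\mathcal{I}_Z(-1))\cong H^0(Z,\mathcal{O}_Z)$ is a nonzero finite group; all the vanishing statements you use ($H^2(X,\mathcal{O}_X(m))=0$ via the one-dimensionality of the fibers and affineness of the base, $H^1(X,\mathcal{O}_X(m))=0$ for $m\geq -1$) are available in the paper. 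The paper instead never touches the cohomological criterion in its proof: it restricts $\mathcal{E}$ to every fiber $X_p$ and shows, using Lemmas \ref{idstr} and \ref{seqid} together with the constancy of the degree on fibers, that $\mathcal{E}_p\cong\mathcal{O}_{X_p}(a)\oplus\mathcal{O}_{X_p}(b)$ for $p\notin f(Z)$ but $\mathcal{E}_p\cong\mathcal{O}_{X_p}(a+n_p)\oplus\mathcal{O}_{X_p}(b-n_p)$ with $n_p>0$ for $p\in f(Z)$, so the splitting type jumps and no global decomposition can exist. Your argument is shorter but outsources the essential content to Hanna's lemma, and it in effect anticipates the computation of $H^1(X,\mathcal{I}_Z(b))$ for $b\leq -1$ carried out later in Lemma \ref{vanish}; the paper's fiberwise computation is longer but yields the explicit splitting types on each fiber, which are reused in the subsequent example and in the comparison with Roberts' transition matrices.
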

\begin{proof}
We will compute the locally free sheaf $\mathcal{E}_p$ on $X_p$ for all $p\in Y$.\\
Using (\ref{exseq1}) we get for $p\notin f(Z)$ the exact sequence
\begin{equation}\begin{CD}\label{seq2}
0 @>>> \mathcal{O}_{X_p}(a) @>>> \mathcal{E}_p @>>> \iota_p^{*}\mathcal{I}_Z(b) @>>> 0,
\end{CD}\end{equation}
as $\mathcal{T}or_1^{\iota_p^{-1}\mathcal{O}_X}(\iota_p^{-1}\mathcal{I}_Z(b),\mathcal{O}_{X_p})=0$ because it is a torsion sheaf and a subsheaf of the torsion free sheaf $\mathcal{O}_{X_p}(a)$.\\ 
We know by (\ref{idstr}) that $\iota_p^{*}\mathcal{I}_Z(b)=\mathcal{O}_{X_p}(b)$. So we get the exact sequence
\begin{equation*}\begin{CD}
0 @>>> \mathcal{O}_{X_p}(a) @>>> \mathcal{E}_p @>>> \mathcal{O}_{X_p}(b) @>>> 0.
\end{CD}\end{equation*}
But $Ext^1_{\mathcal{O}_{X_p}}(\mathcal{O}_{X_p}(b),\mathcal{O}_{X_p}(a))=0$, so that we have $\mathcal{E}_p\cong\mathcal{O}_{X_p}(a)\oplus\mathcal{O}_{X_p}(b)$.\\
Now pick $p\in f(Z)$. Then we start with the same sequence (\ref{seq2}). Using (\ref{seqid}) we get a surjection $\iota_p^{*}\mathcal{I}_Z(b)\twoheadrightarrow \mathcal{O}_{X_p}(b-n_p)$. This induces a surjection $\phi: \mathcal{E}_p\twoheadrightarrow \mathcal{O}_{X_p}(b-n_p)$. Define $\mathcal{L}:=ker(\phi)$, then $\mathcal{L}$ must be a line bundle since $X_p$ is a smooth curve and $\mathcal{E}_p$ is locally free. As the degree of $\mathcal{E}_p$ is constant on the fibers, see (\cite[Theorem 3.1.]{roberts}), we have $deg(\mathcal{E}_p)=a+b$. This implies $\mathcal{L}\cong\mathcal{O}_{X_p}(a+n_p)$. Thus we get the following exact sequence:
\begin{equation*}\begin{CD}
0 @>>> \mathcal{O}_{X_p}(a+n_p) @>>> \mathcal{E}_p @>>> \mathcal{O}_{X_p}(b-n_p) @>>> 0.
\end{CD}\end{equation*}
A short computation gives $Ext^1_{\mathcal{O}_{X_p}}(\mathcal{O}_{X_p}(b-n_p),\mathcal{O}_{X_p}(a+n_p))=0$, so that in this case we have $\mathcal{E}_p=\mathcal{O}_{X_p}(a+n_p)\oplus\mathcal{O}_{X_p}(b-n_p)$.\\
Now if $\mathcal{E}$ were decomposable, that is $\mathcal{E}=\mathcal{O}_X(r)\oplus\mathcal{O}_X(s)$ for $r,s\in\mathbb{Z}$, then the decomposition into the sum of two line bundles had to be the same on every fiber. But for $p\in f(Z)$ we have $n_p>0$, so on finitely many fibers we have different decompositions than the generic decomposition. This shows that $\mathcal{E}$ must be indecomposable.
\end{proof}
\begin{thm}
A locally free sheaf $\mathcal{E}$ of rank two on $X$ is indecomposable if and only if the associated local complete interesection $Z$ is not empty.
\end{thm}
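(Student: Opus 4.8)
The plan is to read this theorem as the biconditional that packages Lemma \ref{inde} together with its converse. The implication ``$Z$ not empty $\Rightarrow \mathcal{E}$ indecomposable'' is exactly the statement of Lemma \ref{inde}, so nothing new is needed there and I would simply cite it. What remains is the reverse implication, which I would prove in its contrapositive form: if the local complete intersection $Z$ in the associated triple $(a,b,Z)$ is empty, then $\mathcal{E}$ is decomposable.

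For the converse I would start from the defining extension (\ref{exseq1}). When $Z=\emptyset$ the ideal sheaf satisfies $\mathcal{I}_Z=\mathcal{O}_X$, so the sequence becomes
\[
0 \longrightarrow \mathcal{O}_X(a) \longrightarrow \mathcal{E} \longrightarrow \mathcal{O}_X(b) \longrightarrow 0 .
\]
Such extensions are classified by $Ext^1_{\mathcal{O}_X}(\mathcal{O}_X(b),\mathcal{O}_X(a))$, and the strategy is to show this group vanishes, so that the sequence splits. Since $\mathcal{O}_X(b)$ is locally free, the sheaf $\mathcal{H}om_{\mathcal{O}_X}(\mathcal{O}_X(b),\mathcal{O}_X(a))$ is just $\mathcal{O}_X(a-b)$ and all higher $\mathcal{E}xt$-sheaves vanish; the local-to-global spectral sequence then collapses to an isomorphism $Ext^1_{\mathcal{O}_X}(\mathcal{O}_X(b),\mathcal{O}_X(a))\cong H^1(X,\mathcal{O}_X(a-b))$. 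Because the triple satisfies $a\geq b$, we have $a-b\geq 0$, and I would invoke the vanishing $H^1(X,\mathcal{O}_X(a-b))=0$ to conclude that the extension splits, giving $\mathcal{E}\cong\mathcal{O}_X(a)\oplus\mathcal{O}_X(b)$, hence decomposable.

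The only genuinely arithmetic point, and thus the step to be careful about, is that $X=\mathbb{P}^1_{\mathbb{Z}}$ lives over $\mathbb{Z}$ rather than over a field, so the vanishing of $H^1$ must be justified over the integral base. This is harmless and can be handled in either of two ways: one may appeal directly to the computation of $H^i(\mathbb{P}^n_A,\mathcal{O}(m))$ valid over any noetherian ring $A$ (for instance \cite[Theorem III.5.1]{hart}), which gives $H^1(\mathbb{P}^1_A,\mathcal{O}(m))=0$ for $m\geq -1$; alternatively one factors through $f$ and uses that $Y$ is affine, so that $H^1(X,\mathcal{O}_X(m))=H^0(Y,R^1f_*\mathcal{O}_X(m))$, together with the fiberwise vanishing $H^1(X_p,\mathcal{O}_{X_p}(m))=0$ and Grauert's base change theorem, exactly as in the corollary preceding Lemma \ref{seqid}. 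Everything else in the argument is formal.
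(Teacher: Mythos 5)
Your proposal is correct and follows essentially the same route as the paper: the forward direction is Lemma \ref{inde}, and the converse is the splitting of the extension of $\mathcal{O}_X(b)$ by $\mathcal{O}_X(a)$ via the vanishing of $Ext^1_{\mathcal{O}_X}(\mathcal{O}_X(b),\mathcal{O}_X(a))\cong H^1(X,\mathcal{O}_X(a-b))$ for $a-b\geq 0$. Your extra care in justifying the $H^1$-vanishing over the integral base is a welcome elaboration of the paper's appeal to ``the knowledge of the cohomology groups on $X$.''
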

\begin{proof}
$(\Rightarrow)$ Assume $Z=\emptyset$. Then there is an exact sequence
\begin{equation*}\begin{CD}
0 @>>> \mathcal{O}_X(a) @>>> \mathcal{E} @>>> \mathcal{O}_X(b) @>>> 0.
\end{CD}\end{equation*}
Using the knowledge of the cohomology groups on $X$ and the fact that $a-b\geq 0$ we see that $Ext^1_{\mathcal{O}_X}(\mathcal{O}_X(b),\mathcal{O}_X(a))=0$. So there is an isomorphism $\mathcal{E}\cong\mathcal{O}_X(a)\oplus\mathcal{O}_X(b)$ and the sheaf is decomposable.\\
$(\Leftarrow)$ This is the previous lemma (\ref{inde}).
\end{proof}
\begin{rem}
\normalfont The lemma and the theorem are arithmetic analogues of the following situation on $\mathbb{P}^2_{\mathbb{C}}$, see (\cite[Exercise 2.3.]{fried}) or (\cite[Example 5.4., p.728]{grif}): let $V$ be a locally free sheaf of rank two which sits in an exact sequence 
\begin{equation*}\begin{CD}
0 @>>> \mathcal{O}_{\mathbb{P}^2} @>>> V @>>> \mathcal{I}_Z @>>> 0,
\end{CD}\end{equation*}
then $V$ is not a direct sum of line bundles if $Z\neq \emptyset$. Here $Z$ is a local complete intersection of codimension two in $\mathbb{P}^2_{\mathbb{C}}$.  
\end{rem}
\begin{rem}
\normalfont Indecomposable locally free sheaves of rank two on $X$ have also been discussed in (\cite{roberts}). Roberts constructs these using transition matrices with respect to the standard affine open cover of $X$. These matrices, elements of $GL(2,\mathbb{Z}[t,t^{-1}])$, have the property that their reductions have different diagonalizations over different primes. More generally it is shown that for every positive integer $r$ there exists an indecomposable locally free sheaf of rank $r$ on $X$.
\end{rem}
\begin{exam}
\normalfont We choose a prime $p\in Y$. Furthermore we pick a closed point $x\in X_p$ such that $[k(x):\mathbb{F}_p]=1$. Define $Z:=\left\{x\right\}$. Then $Z$ is a local complete intersection of codimension two in $X$. So there exists a locally free sheaf $\mathcal{E}$ of rank two on $X$ sitting in the exact sequence
\begin{equation*}\begin{CD}
0 @>>> \mathcal{O}_X(-1) @>>> \mathcal{E} @>>> \mathcal{I}_Z(-1) @>>> 0.
\end{CD}\end{equation*}
The computations in (\ref{inde}) show that we have $\mathcal{E}_q\cong \mathcal{O}_{X_q}(-1)\oplus \mathcal{O}_{X_q}(-1)$ for $q\neq p$ and $\mathcal{E}_p\cong \mathcal{O}_{X_p}\oplus \mathcal{O}_{X_p}(-2)$.\\
In \cite{roberts} this example is described with the help of the transition matrix
 $A=\begin{pmatrix}
t^2 & pt \\
0 & 1
 \end{pmatrix}$.
It is obvious that the reduction modulo p is given by $A_p=\begin{pmatrix}
t^2 & 0 \\
0 & 1
 \end{pmatrix}$. But this matrix is exactly the transition matrix of $\mathcal{O}_{X_p}\oplus \mathcal{O}_{X_p}(-2)$ on $X_p$. But an easy computation, using elementary column transformations over $(\mathbb{Z}/q\mathbb{Z})[t]$ and elementary row transformations over $(\mathbb{Z}/q\mathbb{Z})[t^{-1}]$, shows that for $q\neq p$ we have $A_q\sim \begin{pmatrix}
t & 0 \\
0 & t 
 \end{pmatrix}$. This is the transition matrix of $\mathcal{O}_{X_q}(-1)\oplus \mathcal{O}_{X_q}(-1)$ on $X_q$.
\end{exam}

\subsection{Cohomology}
In this section we first recall known results about the cohomology groups of line bundles on $X$. We go on and study the cohomology of the twisted ideal sheaf $\mathcal{I}_Z(b)$ for a local complete intersection of codimension two in $X$. These results can then be used to study the cohomology of locally free sheaves of rank two on $X$. For example we describe all locally free sheaves of rank two with no cohomology.
\begin{lem}[\normalfont{\cite[Theorem III.5.1]{hart}}]\label{cohomline}
The  $\mathbb{Z}$-module $H^i(X,\mathcal{O}_X(a))$ is free and we have: 
\begin{center}
$rk(H^0(X,\mathcal{O}_X(a))) = \begin{cases} a+1 &\mbox{if } a\geq 0 \\
0 &\mbox{if } a\leq -1. \end{cases}$ 
\end{center}
Using Serre duality we also get:
\begin{center}
$rk(H^1(X,\mathcal{O}_X(a))) = \begin{cases} -a-1 &\mbox{if } a\leq -2 \\
0 &\mbox{if } a\geq -1. \end{cases}$ 
\end{center}
\end{lem}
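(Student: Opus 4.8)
The plan is to compute both cohomology groups directly by \v{C}ech cohomology with respect to the standard affine cover, since this single computation simultaneously produces the freeness over $\mathbb{Z}$ and the stated ranks; this is exactly the cited statement from Hartshorne specialized to $n=1$ and the base ring $\mathbb{Z}$. First I would fix the cover $X=U_0\cup U_1$ with $U_0=\operatorname{Spec}\mathbb{Z}[t]$ and $U_1=\operatorname{Spec}\mathbb{Z}[t^{-1}]$, where $t=x_1/x_0$, so that $U_0\cap U_1=\operatorname{Spec}\mathbb{Z}[t,t^{-1}]$. Over $U_0$ the sheaf $\mathcal{O}_X(a)$ is freely generated by $x_0^a$ and over $U_1$ by $x_1^a=t^a x_0^a$. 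Trivializing everything by the single section $x_0^a$, the \v{C}ech complex becomes
\begin{equation*}\begin{CD}
0 @>>> \mathbb{Z}[t]\,\oplus\,t^a\mathbb{Z}[t^{-1}] @>{d^0}>> \mathbb{Z}[t,t^{-1}] @>>> 0,
\end{CD}\end{equation*}
where the two summands are viewed inside $\mathbb{Z}[t,t^{-1}]$ and $d^0(f,g)=g-f$.

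Next I would read off the two cohomology groups as the kernel and cokernel of $d^0$. For $H^0$ one has $\ker d^0=\mathbb{Z}[t]\cap t^a\mathbb{Z}[t^{-1}]$, which is the $\mathbb{Z}$-span of the monomials $t^0,\ldots,t^a$; this is free of rank $a+1$ when $a\geq 0$ and is $0$ when $a\leq -1$. For $H^1$ one has $\operatorname{coker}d^0=\mathbb{Z}[t,t^{-1}]/(\mathbb{Z}[t]+t^a\mathbb{Z}[t^{-1}])$, and since $\mathbb{Z}[t]$ accounts for all powers $t^k$ with $k\geq 0$ while $t^a\mathbb{Z}[t^{-1}]$ accounts for all powers with $k\leq a$, the quotient has as a $\mathbb{Z}$-basis the monomials $t^k$ with $a+1\leq k\leq -1$. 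Hence $H^1$ is free of rank $-a-1$ when $a\leq -2$ and vanishes when $a\geq -1$, matching the stated formula. In both cases freeness over $\mathbb{Z}$ is immediate because the groups come out as free modules on explicit monomial bases, with no torsion to track.

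The only point requiring genuine care is the cokernel computation: I would verify that the submodule $\mathbb{Z}[t]+t^a\mathbb{Z}[t^{-1}]$ is precisely the span of the monomials $t^k$ with $k\geq 0$ or $k\leq a$, so that the complementary monomials descend to a $\mathbb{Z}$-basis of the quotient rather than merely a generating set. This is the step where one must be sure the quotient is free and not just finitely generated, and it is the arithmetic analogue of the dimension count one would otherwise obtain over a field. As a consistency check I note that the rank $-a-1$ of $H^1(X,\mathcal{O}_X(a))$ for $a\leq -2$ agrees with the rank $a+1$ formula applied to $H^0(X,\mathcal{O}_X(-a-2))$, reflecting relative Serre duality on $X\to Y$ with $\omega_{X/Y}\cong\mathcal{O}_X(-2)$, as indicated in the statement.
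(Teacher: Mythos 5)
Your proof is correct and is essentially the argument behind the paper's cited reference (Hartshorne, Theorem III.5.1) specialized to $\mathbb{P}^1_{\mathbb{Z}}$: the standard \v{C}ech computation on the two-chart cover, which yields the freeness over $\mathbb{Z}$ and the stated ranks simultaneously, and your care with the monomial basis of the cokernel is exactly the point that guarantees freeness of $H^1$. The only cosmetic difference is that you read off the $H^1$ ranks directly from the cokernel whereas the statement phrases them via Serre duality; the two derivations are interchangeable here, and you note the consistency yourself.
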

\begin{rem}\label{excohom}
\normalfont In fact, we can explicitly describe these modules. For $a\geq 0$ we have: 
\begin{center}
$H^0(X,\mathcal{O}_X(a))=\bigoplus\limits_{i=0}^a\mathbb{Z}x_0^{a-i}x_1^i$. 
\end{center}
And there is a perfect pairing of $\mathbb{Z}$-modules: $H^0(X,\mathcal{O}_X(a))\times H^1(X,\mathcal{O}_X(-a-2)) \rightarrow \mathbb{Z}$.
\end{rem}
\begin{lem}
The  $\mathbb{Z}$-module $H^0(X,\mathcal{I}_Z(b))$ is free and we have: 
\begin{center}
$rk(H^0(X,\mathcal{I}_Z(b))) = \begin{cases} b+1 &\mbox{if } b\geq 0 \\
0 &\mbox{if } b\leq -1. \end{cases}$ 
\end{center}
\end{lem}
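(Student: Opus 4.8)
The plan is to read off both assertions from the twist by $\mathcal{O}_X(b)$ of the defining exact sequence of the ideal sheaf, namely
\begin{equation*}\begin{CD}
0 @>>> \mathcal{I}_Z(b) @>>> \mathcal{O}_X(b) @>>> \mathcal{O}_Z(b) @>>> 0,
\end{CD}\end{equation*}
and to compare with the cohomology of $\mathcal{O}_X(b)$ recorded in Lemma \ref{cohomline}.

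For the freeness statement I would argue purely algebraically. Since taking global sections is left exact, the inclusion of sheaves induces an inclusion of $\mathbb{Z}$-modules $H^0(X,\mathcal{I}_Z(b))\hookrightarrow H^0(X,\mathcal{O}_X(b))$. The target is a free $\mathbb{Z}$-module by Lemma \ref{cohomline}, and as $\mathbb{Z}$ is a principal ideal domain every submodule of a free module is free. This settles freeness with no further computation.

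For the rank the key observation is that $Z$, being a local complete intersection of codimension two in the two-dimensional scheme $X$, is zero-dimensional, hence a finite scheme supported over the finitely many nonzero primes $f(Z)=\{p_1,\dots,p_r\}$. Therefore $H^0(X,\mathcal{O}_Z(b))=H^0(Z,\mathcal{O}_Z(b))$ is a finite-length $\mathbb{Z}$-module supported at these primes, in particular a torsion group. The long exact cohomology sequence then yields
\begin{equation*}\begin{CD}
0 @>>> H^0(X,\mathcal{I}_Z(b)) @>>> H^0(X,\mathcal{O}_X(b)) @>{\rho}>> H^0(Z,\mathcal{O}_Z(b)),
\end{CD}\end{equation*}
and since the target of $\rho$ is torsion, its image is finite. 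Hence the inclusion $H^0(X,\mathcal{I}_Z(b))\hookrightarrow H^0(X,\mathcal{O}_X(b))$ has finite index, so the two free modules share the same rank, which by Lemma \ref{cohomline} is $b+1$ for $b\geq 0$ and $0$ for $b\leq -1$.

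I expect no serious obstacle; the only point needing care is the claim that $H^0(\mathcal{O}_Z(b))$ is torsion, which rests on the fact that a codimension-two (hence zero-dimensional) closed subscheme of $X$ cannot meet the generic fiber and so lives over finitely many nonzero primes. As an alternative to the index argument one can compute the rank by flat base change to the generic fiber: because $\mathbb{Q}$ is flat over $\mathbb{Z}$ one has $H^0(X,\mathcal{I}_Z(b))\otimes_{\mathbb{Z}}\mathbb{Q}\cong H^0(X_0,(\mathcal{I}_Z(b))_0)$, and since $Z\cap X_0=\emptyset$ the computation of Lemma \ref{idstr} gives $(\mathcal{I}_Z)_0=\mathcal{O}_{X_0}$, so the right-hand side is $H^0(\mathbb{P}^1_{\mathbb{Q}},\mathcal{O}(b))$, whose dimension reproduces exactly the claimed ranks.
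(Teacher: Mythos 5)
Your proof is correct. The freeness argument (a submodule of a free $\mathbb{Z}$-module is free) and the use of the twisted ideal-sheaf sequence are exactly what the paper does. For the rank, your primary route differs slightly from the paper's: you observe that the cokernel of $H^0(X,\mathcal{I}_Z(b))\hookrightarrow H^0(X,\mathcal{O}_X(b))$ injects into $H^0(Z,\mathcal{O}_Z)$, which is a finite (hence torsion) group because $Z$ is zero-dimensional and supported over finitely many primes, so the inclusion has finite index and the ranks agree. The paper instead tensors with $\mathbb{Q}$ and uses flat base change to the generic fiber together with Lemma \ref{idstr} (that $\iota_0^{*}\mathcal{I}_Z(b)=\mathcal{O}_{X_0}(b)$) --- which is precisely the alternative you sketch in your last paragraph. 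The two computations buy essentially the same thing; your index argument is marginally more elementary in that it avoids invoking the base-change isomorphism, while the paper's version reuses Lemma \ref{idstr}, which is needed elsewhere anyway. The one point to be careful about, which you correctly flag, is that $H^0(Z,\mathcal{O}_Z)$ is finite; this holds because the closed points of $X$ all have finite residue fields.
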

\begin{proof}
Using the previous lemma (\ref{cohomline}) and the exact sequence 
\begin{equation}\begin{CD}\label{idealseq}
0 @>>> \mathcal{I}_Z(b) @>>> \mathcal{O}_X(b) @>>> \mathcal{O}_Z @>>> 0
\end{CD}\end{equation}
we conclude that for $b\leq -1$ we must have $H^0(X,\mathcal{I}_Z(b))=0$. The sequence also shows that $H^0(X,\mathcal{I}_Z(b))$ is a submodule of a free $\mathbb{Z}$-module, hence it must be itself free. To find its rank we use the fact that $\mathbb{Q}$ is a flat $\mathbb{Z}$-module, hence (\cite[Proposition III.9.3]{hart}) gives an isomorphism 
\begin{center}
$H^0(X,\mathcal{I}_Z(b))\otimes_{\mathbb{Z}}\mathbb{Q}\cong H^0(X_0,\iota_0^{*}\mathcal{I}_Z(b))$. 
\end{center}
But by (\ref{idstr}) we know that $\iota_0^{*}\mathcal{I}_Z(b)=\mathcal{O}_{X_0}(b)$. This shows that for $b\geq 0$ the rank of $H^0(X,\mathcal{I}_Z(b))$ is $b+1$.
\end{proof}
\begin{lem}
The $\mathbb{Z}$-module $H^1(X,\mathcal{I}_Z(b))$ is torsion for $b\geq 0$. Furthermore if we define the number $n_p:=l_{\mathcal{O}_{X_p}}(\iota_p^{*}\mathcal{O}_Z)$ for $p\in Y$, then $H^1(X,\mathcal{I}_Z(b))=0$ if $b\geq \max\limits_{p\in Y}\left\{n_p-1\right\}$.
\end{lem}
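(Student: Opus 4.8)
The plan is to take the long exact cohomology sequence of the defining sequence (\ref{idealseq}), identify $H^1(X,\mathcal{I}_Z(b))$ as a cokernel, and then analyze that cokernel prime by prime. First I would collect the inputs. Since $Z$ has codimension two in the two-dimensional scheme $X$ it is zero-dimensional, and because $Z\cap X_0=\emptyset$ its image in $Y$ consists of finitely many \emph{closed} points; hence $\mathcal{O}_Z$ is a $\mathbb{Z}$-torsion sheaf and $H^0(Z,\mathcal{O}_Z)$ is a finite abelian group. By (\ref{cohomline}) we have $H^1(X,\mathcal{O}_X(b))=0$ for $b\geq 0$, so the long exact sequence of (\ref{idealseq}) collapses to
\[
H^0(X,\mathcal{O}_X(b))\xrightarrow{\ \rho_b\ }H^0(Z,\mathcal{O}_Z)\longrightarrow H^1(X,\mathcal{I}_Z(b))\longrightarrow 0 .
\]
Thus $H^1(X,\mathcal{I}_Z(b))\cong\operatorname{coker}(\rho_b)$, a quotient of the finite group $H^0(Z,\mathcal{O}_Z)$, which is therefore torsion. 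This settles the first assertion.

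For the vanishing I would show that $\rho_b$ is surjective as soon as $b\geq\max_p\{n_p-1\}$. Since $C:=\operatorname{coker}(\rho_b)$ is finite, it is enough to prove $C\otimes_{\mathbb{Z}}\mathbb{F}_p=0$ for every prime $p$; by right exactness of $-\otimes\mathbb{F}_p$ this is exactly the surjectivity of the reduced map $\rho_b\otimes\mathbb{F}_p\colon H^0(X,\mathcal{O}_X(b))\otimes\mathbb{F}_p\to H^0(Z,\mathcal{O}_Z)\otimes\mathbb{F}_p$. For $p\notin f(Z)$ the target vanishes and there is nothing to check, so I may fix $p\in f(Z)$. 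Using that $H^1(X,\mathcal{O}_X(b))$ is free, cohomology and base change identify the source with $H^0(X_p,\mathcal{O}_{X_p}(b))$, while $\iota_p^{*}\mathcal{O}_Z=\mathcal{O}_Z\otimes_{\mathbb{Z}}\mathbb{F}_p$ together with the affineness of $Z$ gives $H^0(Z,\mathcal{O}_Z)\otimes\mathbb{F}_p\cong H^0(X_p,\iota_p^{*}\mathcal{O}_Z)$; by naturality of base change $\rho_b\otimes\mathbb{F}_p$ becomes the honest restriction map on the fibre $X_p$.

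It then remains to prove this fibre restriction is surjective for $b\geq n_p-1$, and here I would reuse the length computation from (\ref{seqid}). Writing $\iota_p^{*}\mathcal{O}_Z$ as a length-$n_p$ quotient of $\mathcal{O}_{X_p}$ produces, on the smooth curve $X_p=\mathbb{P}^1_{\mathbb{F}_p}$, a short exact sequence $0\to\mathcal{O}_{X_p}(-n_p)\to\mathcal{O}_{X_p}\to\iota_p^{*}\mathcal{O}_Z\to 0$. Twisting by $b$ (noting that $\iota_p^{*}\mathcal{O}_Z$ is supported on points and so is unaffected by the twist) and passing to cohomology shows that the cokernel of the restriction injects into $H^1(X_p,\mathcal{O}_{X_p}(b-n_p))$, which vanishes precisely when $b-n_p\geq -1$, i.e. $b\geq n_p-1$. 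Running this over the finitely many $p\in f(Z)$ gives $C\otimes\mathbb{F}_p=0$ for all $p$ once $b\geq\max_p\{n_p-1\}$, hence $C=0$.

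The two cohomology long exact sequences are routine; the step I expect to need genuine care, and which I regard as the main obstacle, is the passage from the integral statement to the fibres: justifying that surjectivity of $\rho_b$ over $\mathbb{Z}$ may be tested after $-\otimes\mathbb{F}_p$ (using finiteness of the cokernel), and correctly matching $\rho_b\otimes\mathbb{F}_p$ with the fibre restriction through cohomology and base change and the identification of $H^0(Z,\mathcal{O}_Z)\otimes\mathbb{F}_p$ with $H^0(X_p,\iota_p^{*}\mathcal{O}_Z)$. Once that bookkeeping is in place, the geometric content reduces to the single vanishing $H^1(\mathbb{P}^1_{\mathbb{F}_p},\mathcal{O}(b-n_p))=0$.
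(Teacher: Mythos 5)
Your proof is correct, and its first half --- reading off from the long exact sequence of (\ref{idealseq}), using $H^1(X,\mathcal{O}_X(b))=0$ for $b\geq 0$, that $H^1(X,\mathcal{I}_Z(b))$ is a quotient of the finite group $H^0(Z,\mathcal{O}_Z)$ --- is exactly the paper's. For the vanishing the two arguments share the same geometric core, namely the sequence $0\to\mathcal{O}_{X_p}(b-n_p)\to\mathcal{O}_{X_p}(b)\to\iota_p^{*}\mathcal{O}_Z\to 0$ coming from (\ref{seqid}) and the vanishing $H^1(\mathbb{P}^1_{\mathbb{F}_p},\mathcal{O}(b-n_p))=0$ for $b\geq n_p-1$, but they descend from the fibres to $\mathbb{Z}$ by different mechanisms. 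The paper applies Grauert's base change theorem to $\mathcal{I}_Z(b)$ itself, using that this sheaf is torsion free and hence flat over $Y$, to get $H^1(X,\mathcal{I}_Z(b))\otimes k(p)\cong H^1(X_p,\iota_p^{*}\mathcal{I}_Z(b))=0$ for every $p$ and concludes from there. You instead prove surjectivity of the restriction map $\rho_b$ by testing its finite cokernel modulo each prime, which requires base change only for $\mathcal{O}_X(b)$ (where it is an explicit computation on the monomial basis of Remark \ref{excohom}) and for the skyscraper $\mathcal{O}_Z$ (where it is trivial). Your route is more elementary in that it never invokes cohomology-and-base-change for the non-locally-free sheaf $\mathcal{I}_Z(b)$, at the cost of the naturality bookkeeping you correctly identify as the delicate step; the paper's version is the shorter write-up once Grauert's theorem is granted. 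Both are complete proofs.
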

\begin{proof}
For the first assertion we note that in this case $H^1(X,\mathcal{I}_Z(b))$ is a quotient of the torsion module $H^0(Z,\mathcal{O}_Z)$.\\
By (\ref{idstr}) we know that if $p\notin f(Z)$, then $\iota_p^{*}\mathcal{I}_Z(b)=\mathcal{O}_{X_p}(b)$ and as $b\geq -1$ we see that $H^1(X_p,\iota_p^{*}\mathcal{I}_Z(b))=0$. Now if $p\in f(Z)$ we use (\ref{seqid}) and see that there is an isomorphism 
\begin{center}
$H^1(X_p,\iota_p^{*}\mathcal{I}_Z(b))\cong H^1(X_p,\mathcal{O}_{X_p}(b-n_p))$. 
\end{center}
By the choice of $b$ the latter group vanishes. So we have $H^1(X_p,\iota_p^{*}\mathcal{I}_Z(b))=0$ for all $p\in Y$. As $\mathcal{I}_Z(b)$ is a torsion free $\mathcal{O}_X$-module, it is flat over $Y$. This implies we can use Grauert's base change theorem, (\cite[Corollary III.12.9]{hart}), so that 
\begin{center}
$H^1(X,\mathcal{I}_Z(b))\otimes k(p) \cong H^1(X_p,\iota_p^{*}\mathcal{I}_Z(b))=0$
\end{center}
for all $p\in Y$. But this is only possible if $H^1(X,\mathcal{I}_Z(b))=0$.
\end{proof}
\begin{lem}\label{vanish}
For $b\leq -1$ there is an isomorphism 
\begin{center}
$H^1(X,\mathcal{I}_Z(b))\cong H^0(Z,\mathcal{O}_Z)\oplus H^1(X,\mathcal{O}_X(b))$.
\end{center}
\end{lem}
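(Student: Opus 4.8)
The plan is to read off the isomorphism from the long exact cohomology sequence attached to the defining sequence (\ref{idealseq}). Since $Z$ has codimension two in the two-dimensional scheme $X$, it is zero-dimensional, and the closed immersion $\iota\colon Z\hookrightarrow X$ is affine; hence $\mathcal{O}_Z$ (viewed on $X$ as $\iota_*\mathcal{O}_Z$) satisfies $H^i(X,\mathcal{O}_Z)=H^i(Z,\mathcal{O}_Z)$, and this vanishes for all $i\geq 1$ because $Z$ is affine. The only piece of this I really need is the vanishing $H^1(X,\mathcal{O}_Z)=0$, together with the identification $H^0(X,\mathcal{O}_Z)=H^0(Z,\mathcal{O}_Z)$.

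Applying cohomology to (\ref{idealseq}) and inserting these facts, the relevant stretch of the long exact sequence becomes
\[
H^0(X,\mathcal{O}_X(b))\longrightarrow H^0(Z,\mathcal{O}_Z)\stackrel{\delta}{\longrightarrow} H^1(X,\mathcal{I}_Z(b))\longrightarrow H^1(X,\mathcal{O}_X(b))\longrightarrow 0,
\]
where the terminal zero is exactly $H^1(X,\mathcal{O}_Z)$. For $b\leq -1$ Lemma (\ref{cohomline}) gives $H^0(X,\mathcal{O}_X(b))=0$, so $\delta$ is injective and I obtain the short exact sequence
\[
0\longrightarrow H^0(Z,\mathcal{O}_Z)\longrightarrow H^1(X,\mathcal{I}_Z(b))\longrightarrow H^1(X,\mathcal{O}_X(b))\longrightarrow 0.
\]

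It remains to split this sequence. The submodule $H^0(Z,\mathcal{O}_Z)$ is a finite-length, hence torsion, $\mathbb{Z}$-module, since $Z$ is supported on the finitely many primes in $f(Z)$ and meets no fiber over the generic point; the quotient $H^1(X,\mathcal{O}_X(b))$ is free over $\mathbb{Z}$ by Lemma (\ref{cohomline}). A short exact sequence of $\mathbb{Z}$-modules whose quotient term is free, hence projective, splits, and this yields the claimed isomorphism $H^1(X,\mathcal{I}_Z(b))\cong H^0(Z,\mathcal{O}_Z)\oplus H^1(X,\mathcal{O}_X(b))$. I expect the argument to present no serious difficulty; the only points requiring care are the vanishing $H^1(X,\mathcal{O}_Z)=0$ coming from the affineness of the zero-dimensional scheme $Z$ and, above all, the observation that the quotient is free so that the extension splits.
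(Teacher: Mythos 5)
Your argument is correct and is essentially the paper's own proof: both reduce the long exact sequence of (\ref{idealseq}) to the short exact sequence $0\to H^0(Z,\mathcal{O}_Z)\to H^1(X,\mathcal{I}_Z(b))\to H^1(X,\mathcal{O}_X(b))\to 0$ and split it using the freeness of $H^1(X,\mathcal{O}_X(b))$. You merely spell out the two vanishing statements ($H^0(X,\mathcal{O}_X(b))=0$ for $b\leq -1$ and $H^1(X,\mathcal{O}_Z)=0$ from the affineness of $Z$) that the paper leaves implicit.
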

\begin{proof}
The long exact sequence associated to (\ref{idealseq}) reduces to the short exact sequence
\begin{equation*}\begin{CD}
0 @>>> H^0(Z,\mathcal{O}_Z) @>>> H^1(X,\mathcal{I}_Z(b)) @>>>  H^1(X,\mathcal{O}_X(b)) @>>> 0.
\end{CD}\end{equation*}
As $H^1(X,\mathcal{O}_X(b))$ is free, this sequence splits.
\end{proof}

\begin{thm}
A locally free sheaf $\mathcal{E}$ of rank two on $X$ has no cohomology if and only if $\mathcal{E}\cong \mathcal{O}_X(-1)\oplus\mathcal{O}_X(-1)$.
\end{thm}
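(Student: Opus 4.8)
The statement ``has no cohomology'' is to be read as $H^i(X,\mathcal{E})=0$ for all $i$; since the fibres of $f$ are curves, $H^2(X,\mathcal{E})$ vanishes automatically, so the condition amounts to $H^0(X,\mathcal{E})=H^1(X,\mathcal{E})=0$. The plan is to prove the two implications separately. For the implication $(\Leftarrow)$ I would simply invoke Lemma \ref{cohomline}: for $a=-1$ both $H^0(X,\mathcal{O}_X(-1))$ and $H^1(X,\mathcal{O}_X(-1))$ vanish, and cohomology commutes with the finite direct sum, so $\mathcal{O}_X(-1)\oplus\mathcal{O}_X(-1)$ has no cohomology.

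For $(\Rightarrow)$ I would start from the extension (\ref{exseq1}) attached to the triple $(a,b,Z)$ and run the long exact cohomology sequence. First, the injection $H^0(X,\mathcal{O}_X(a))\hookrightarrow H^0(X,\mathcal{E})=0$ forces $H^0(X,\mathcal{O}_X(a))=0$, hence $a\leq -1$ by Lemma \ref{cohomline}; since $b\leq a$, this already gives $b\leq -1$. Next, using $H^1(X,\mathcal{E})=0$ together with $H^2(X,\mathcal{O}_X(a))=0$, the same sequence yields $H^1(X,\mathcal{I}_Z(b))=0$. Because $b\leq -1$, I can now apply Lemma \ref{vanish}, which identifies $H^1(X,\mathcal{I}_Z(b))$ with $H^0(Z,\mathcal{O}_Z)\oplus H^1(X,\mathcal{O}_X(b))$; the vanishing of this direct sum forces both summands to vanish. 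The vanishing of $H^0(Z,\mathcal{O}_Z)$ gives $Z=\emptyset$, and the vanishing of $H^1(X,\mathcal{O}_X(b))$ gives $b\geq -1$ by Lemma \ref{cohomline}, so $b=-1$.

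It remains to pin down $a$. Since $Z=\emptyset$, the characterization of indecomposability proved above shows $\mathcal{E}$ is decomposable, so $\mathcal{E}\cong\mathcal{O}_X(a)\oplus\mathcal{O}_X(b)=\mathcal{O}_X(a)\oplus\mathcal{O}_X(-1)$; the summand $\mathcal{O}_X(a)$ must itself have no cohomology, and $H^1(X,\mathcal{O}_X(a))=0$ combined with the earlier bound $a\leq -1$ forces $a=-1$. Equivalently, the long exact sequence gives an isomorphism $H^0(X,\mathcal{I}_Z(b))\cong H^1(X,\mathcal{O}_X(a))$, and $H^0(X,\mathcal{O}_X(-1))=0$ kills the left-hand side. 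Either way this yields $\mathcal{E}\cong\mathcal{O}_X(-1)\oplus\mathcal{O}_X(-1)$.

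The one point requiring care, and the main potential obstacle, is extracting $Z=\emptyset$ and $b=-1$ simultaneously: everything hinges on already knowing $b\leq -1$, so that Lemma \ref{vanish} is applicable rather than the companion statement valid for $b\geq 0$. This is precisely why deriving $a\leq -1$ first, and invoking $b\leq a$, must come before any manipulation of $H^1$.
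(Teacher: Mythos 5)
Your proposal is correct and follows essentially the same route as the paper: write $\mathcal{E}$ as the extension (\ref{exseq1}), run the long exact cohomology sequence, and use Lemmas \ref{cohomline} and \ref{vanish} to force $a=b=-1$ and $Z=\emptyset$, after which the extension splits. The only (immaterial) difference is the order of deductions --- the paper pins down $a=-1$ first via the isomorphism $H^0(X,\mathcal{I}_Z(b))\cong H^1(X,\mathcal{O}_X(a))$ and then extracts $b=-1$, $Z=\emptyset$ from Lemma \ref{vanish}, whereas you do the reverse (and in fact $a=-1$ already follows from $b=-1$ together with $b\leq a\leq -1$).
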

\begin{proof}
By (\ref{exseq1}) we can write $\mathcal{E}$ as an extension
\begin{equation*}\begin{CD}
0 @>>> \mathcal{O}_X(a) @>>> \mathcal{E} @>>> \mathcal{I}_Z(b) @>>> 0 
\end{CD}\end{equation*}
with $a\geq b$ and $Z$ a local complete intersection of codimension two in $X$. The long exact sequence in cohomology shows that 
\begin{enumerate}
\item $H^0(X,\mathcal{O}_X(a))=0$ 
\item $H^0(X,\mathcal{I}_Z(b))\cong H^1(X,\mathcal{O}_X(a))$
\item $H^1(X,\mathcal{I}_Z(b))=0$. 
\end{enumerate}
Now (1.) implies $a\leq -1$. So we must have $b\leq -1$. So $H^0(X,\mathcal{I}_Z(b))=0$. Thus by (2.) $H^1(X,\mathcal{O}_X(a))=0$. This is only possible if $a=-1$. So still $b\leq -1$, but then (\ref{vanish}) shows that for (3.) to be true we need $b=-1$ and $Z=\emptyset$. The exact sequence reads
\begin{equation*}\begin{CD}
0 @>>> \mathcal{O}_X(-1) @>>> \mathcal{E} @>>> \mathcal{O}_X(-1) @>>> 0.
\end{CD}\end{equation*}
But this sequence splits. So $\mathcal{E}\cong\mathcal{O}_X(-1)\oplus\mathcal{O}_X(-1)$. The other implication is clear.
\end{proof}

\section{Arakelov geometry}
In this section we will study locally free sheaves of rank two on $X$ in the Arakelov geometric setting. For this purpose we also need Hermitian coherent sheaves. These are pairs $(\mathcal{F},h)$ where $\mathcal{F}$ is a coherent sheaf on $X$ such that $\mathcal{F}_0$ is locally free and $h$ is a Hermitian metric on the associated vector bundle $F$ on $X(\mathbb{C})$. Such a pair will be denoted by $\overline{\mathcal{F}}$.\\ 
In the following we will equip the manifold $X(\mathbb{C})=\mathbb{C}\mathbb{P}^1$ with the Fubini-Study metric. This induces a metric, also called Fubini-Study metric, on $O_{\mathbb{C}\mathbb{P}^1}(1)$. Using tensor products and duals this defines the Fubini-Study metric on $O_{\mathbb{C}\mathbb{P}^1}(m)$ for any $m\in\mathbb{Z}$, thus we get a Hermitian vector bundle $\overline{\mathcal{O}_X(m)}$.\\ 
Now if $E$ is a rank 2 vector bundle on $\mathbb{C}\mathbb{P}^1$, then it is a direct sum of line bundles. Each line bundle is equipped with the Fubini-Study metric and we use the orthogonal direct sum metric on $E$ such that $(E,h)=(L_1\oplus L_2,h'\oplus h'')$.\\
If $Z$ is a local complete intersection of codimension two in $X$, then $\mathcal{I}_Z(b)$ is a coherent sheaf on $X$ and $\mathcal{I}_Z(b)_0=\mathcal{O}_{X_0}(b)$ is locally free. We equip the associated line bundle $O_{\mathbb{C}\mathbb{P}^1}(b)$ on $\mathbb{C}\mathbb{P}^1$ with the Fubini-Study metric, so that we get a Hermitian coherent sheaf $\overline{\mathcal{I}_Z(b)}$ on $X$.\\
For a locally free sheaf $\mathcal{E}$ of rank two on $X$ we have an exact sequence:
\begin{equation*}\begin{CD}
0 @>>> \mathcal{O}_X(a) @>>> \mathcal{E} @>>> \mathcal{I}_Z(b) @>>> 0.
\end{CD}\end{equation*}
We get the associated sequence on $\mathbb{C}\mathbb{P}^1$:
\begin{equation*}\begin{CD}
0 @>>> O_{\mathbb{C}\mathbb{P}^1}(a) @>>> E @>>> O_{\mathbb{C}\mathbb{P}^1}(b) @>>> 0, 
\end{CD}\end{equation*}
so that the associated bundle is $E=O_{\mathbb{C}\mathbb{P}^1}(a)\oplus O_{\mathbb{C}\mathbb{P}^1}(b)$ with the metric $h$ such that $(E,h)=(O_{\mathbb{C}\mathbb{P}^1}(a)\oplus O_{\mathbb{C}\mathbb{P}^1}(b),h'\oplus h'')$. Thus we have a Hermitian vector bundle $\overline{\mathcal{E}}$.

\subsection{Arithmetic Chern classes}
If $\mathcal{E}$ is a locally free sheaf of rank two on $X$, then there is an associated triple $(a,b,Z)$ with $a,b\in \mathbb{Z}$ such that $a\geq b$ and $Z$ a local complete intersection of codimension two in $X$ and $\mathcal{E}$ is an extension of $\mathcal{O}_X(a)$ by $\mathcal{I}_Z(b)$. So we should really write $\mathcal{E}_Z^{(a,b)}$ for the sheaf. In the following if we have a locally free sheaf of rank two we always write $\mathcal{E}$ with the associated triple $(a,b,Z)$ in mind.\\
Since any locally free sheaf of rank two induces a Hermitian vector bundle $\overline{\mathcal{E}}$, we can compute its arithmetic Chern classes. As $\mathcal{E}$ sits in the exact sequence
\begin{equation*}\begin{CD}
0 @>>> \mathcal{O}_X(a) @>>> \mathcal{E} @>>> \mathcal{I}_Z(b) @>>> 0,
\end{CD}\end{equation*}
we obviously want to find the Chern classes of $\mathcal{E}$ using the Chern classes of $\mathcal{O}_X(a)$ and $\mathcal{I}_Z(b)$. But the arithmetic Chern classes $\widehat{c}_i$ do not behave well on exact sequences. The so called Bott-Chern secondary classes will usually appear. But fortunately in our situation the vector bundles on $\mathbb{C}\mathbb{P}^1$ have a simple form and by our choice of the metric $h$, the exact sequence is split, see (\cite[1.2.1.]{soul}) for a definition of split. But then all Bott-Chern secondary classes vanish by (\cite[Theorem 1.2.2.(iii)]{soul}).
\begin{lem}\label{ceins}
Let $\mathcal{I}_Z(b)$ be the twisted ideal sheaf of a local complete intersection of codimension two in $X$, then
\begin{center}
$\widehat{c}_1(\overline{\mathcal{I}_Z(b)})=\widehat{c}_1(\overline{\mathcal{O}_X(b)})$.
\end{center}
\end{lem}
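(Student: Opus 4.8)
The plan is to reduce the equality to the multiplicativity of the determinant and to the fact that the torsion sheaf $\mathcal{O}_Z$ contributes nothing to a first Chern class. The tool is the defining sequence (\ref{idealseq}),
\[
0 \longrightarrow \mathcal{I}_Z(b) \longrightarrow \mathcal{O}_X(b) \longrightarrow \mathcal{O}_Z \longrightarrow 0 .
\]
First I would make precise how $\widehat{c}_1$ of the non locally free sheaf $\overline{\mathcal{I}_Z(b)}$ is to be read, namely through its metrized determinant, since on a regular scheme $\widehat{c}_1(\overline{\mathcal{F}})=\widehat{c}_1(\det\overline{\mathcal{F}})$. On the underlying scheme, $\mathcal{I}_Z(b)$ is torsion free of rank one and $Z$ has codimension two, so $\mathcal{I}_Z$ and $\mathcal{O}_X$ agree away from the codimension two locus $Z$; as $\mathcal{O}_X$ is reflexive this gives $\det\mathcal{I}_Z(b)=(\mathcal{I}_Z(b))^{\vee\vee}=\mathcal{O}_X(b)$. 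On the archimedean side, by definition the bundle attached to $\mathcal{I}_Z(b)$ on $X(\mathbb{C})=\mathbb{C}\mathbb{P}^1$ is $O_{\mathbb{C}\mathbb{P}^1}(b)$ with the Fubini-Study metric, which is exactly the Hermitian line bundle underlying $\overline{\mathcal{O}_X(b)}$. Hence $\det\overline{\mathcal{I}_Z(b)}\cong\overline{\mathcal{O}_X(b)}$ as Hermitian line bundles, and the claim follows.

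Equivalently, and perhaps more in the spirit of the surrounding discussion, I would invoke additivity of $\widehat{c}_1$ along the sequence above. The first Chern class carries no Bott-Chern secondary term, because with the induced sub and quotient metrics the determinant metrics already multiply, so the degree one transgression vanishes; thus
\[
\widehat{c}_1(\overline{\mathcal{O}_X(b)})=\widehat{c}_1(\overline{\mathcal{I}_Z(b)})+\widehat{c}_1(\overline{\mathcal{O}_Z})
\]
in $\widehat{\operatorname{CH}}^1(X)$, and it remains to see $\widehat{c}_1(\overline{\mathcal{O}_Z})=0$. Its geometric component is zero because $\mathcal{O}_Z$ is supported on $Z$, which has codimension two, so its class in $\operatorname{CH}^1(X)$ vanishes. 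Its archimedean component is zero because $f(Z)$ consists of finitely many closed primes, whence $Z\cap X_0=\emptyset$ and $Z(\mathbb{C})=\emptyset$; the bundle associated to $\mathcal{O}_Z$ on $\mathbb{C}\mathbb{P}^1$ is the zero bundle and carries no curvature or Green current data.

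I expect the only real obstacle to be the bookkeeping of the definitions for coherent, non locally free sheaves: one must fix that $\widehat{c}_1(\overline{\mathcal{I}_Z(b)})$ is computed through a metrized locally free resolution or the determinant, check that the Hermitian structure used there reproduces the Fubini-Study metric on $O_{\mathbb{C}\mathbb{P}^1}(b)$, and confirm that $\mathcal{O}_Z$, being supported in codimension two and disjoint from $X(\mathbb{C})$, is trivial in both the finite and the archimedean parts of $\widehat{\operatorname{CH}}^1(X)$. Once these identifications are settled the equality is immediate and requires no computation.
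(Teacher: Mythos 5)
Your first argument is essentially the paper's proof: the paper identifies $\mathcal{I}_Z(b)^{**}=\mathcal{O}_X(b)$ and invokes Moriwaki's result that $\widehat{c}_1(\mathcal{F},h)=\widehat{c}_1(\mathcal{F}^{**},h)$ for a torsion free coherent sheaf, which is exactly your determinant/double-dual reduction combined with the observation that both sheaves induce the same Fubini--Study metrized line bundle on $X(\mathbb{C})$. Your alternative route via additivity of $\widehat{c}_1$ along the sequence $0\to\mathcal{I}_Z(b)\to\mathcal{O}_X(b)\to\mathcal{O}_Z\to 0$ and the vanishing of $\widehat{c}_1(\overline{\mathcal{O}_Z})$ in codimension one is just the standard proof of that cited fact, so both versions are correct.
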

\begin{proof}
Given a torsion free coherent sheaf $\mathcal{F}$ on $X$, we can look at the double dual $\mathcal{F}^{**}$. Since $\mathcal{F}$ is torsion free it only differs from its double dual in codimension two, hence $\mathcal{F}$ and $\mathcal{F}^{**}$ induce the same vector bundle on $X(\mathbb{C})$ which we will equip with a Hermitian metric $h$. By (\cite[Corollary 8.9.]{mori1}) we have the equality $\widehat{c}_1(\mathcal{F},h)=\widehat{c}_1(\mathcal{F}^{**},h)$. Applying this to $\mathcal{F}=\mathcal{I}_Z(b)$ and remembering that we chose the Fubini-Study metric on the vector bundle associated to $\mathcal{I}_Z(b)$ we get the desired result, since $\mathcal{I}_Z(b)^{**}=\mathcal{O}_X(b)$.
\end{proof}
\begin{rem}
\normalfont In fact, the arithmetic first Chern class defines an isomorphism 
\begin{center}
$\widehat{c}_1: \widehat{Pic}(W)\stackrel{\sim}{\longrightarrow} \widehat{CH}$$^1(W)$ 
\end{center}
for any regular arithmetic variety $W$. The map is given by 
\begin{center}
$\widehat{c}_1(\overline{\mathcal{L}})=(div(s),-log(h(s,s)))$ 
\end{center}
for a non-zero rational section $s$ of $\mathcal{L}$. See (\cite[Proposition III.4.2.1]{soul3}) for a proof of this fact.
\end{rem}

\begin{lem}\label{czwei}
Let $\mathcal{I}_Z(b)$ be the twisted ideal sheaf of a local complete intersection of codimension two in $X$, then
\begin{center}
$\widehat{c}_2(\overline{\mathcal{I}_Z(b)})=log(\#H^0(Z,\mathcal{O}_Z))$.
\end{center}
\end{lem}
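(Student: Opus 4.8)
The plan is to exploit the defining short exact sequence of the ideal sheaf
\begin{equation*}\begin{CD}
0 @>>> \mathcal{I}_Z(b) @>>> \mathcal{O}_X(b) @>>> \mathcal{O}_Z @>>> 0
\end{CD}\end{equation*}
from (\ref{idealseq}), exactly as in the proof of Lemma (\ref{ceins}), but now reading off the degree-two part of the total arithmetic Chern class. First I would observe that on the complex manifold $X(\mathbb{C})=\mathbb{C}\mathbb{P}^1$ nothing happens: since $Z$ has codimension two and is supported over the finitely many primes in $f(Z)$, we have $Z(\mathbb{C})=\emptyset$, so $\mathcal{O}_Z$ restricts to the zero bundle and the sequence becomes the identity of the Hermitian line bundle $O_{\mathbb{C}\mathbb{P}^1}(b)$ with itself. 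In particular the sequence is orthogonally split on $X(\mathbb{C})$ and, just as in the introduction to this subsection, all Bott-Chern secondary classes vanish by (\cite[Theorem 1.2.2.(iii)]{soul}).

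Next I would apply the arithmetic Whitney sum formula to this metrically split sequence, which gives
\begin{equation*}
\widehat{c}(\overline{\mathcal{O}_X(b)})=\widehat{c}(\overline{\mathcal{I}_Z(b)})\cdot\widehat{c}(\overline{\mathcal{O}_Z}).
\end{equation*}
Since $\mathcal{O}_X(b)$ is a line bundle its total class has no part in degree two, and since $\mathcal{O}_Z$ is supported in codimension two it has $\widehat{c}_1(\overline{\mathcal{O}_Z})=0$; indeed the degree-one part of the same formula recovers exactly the statement $\widehat{c}_1(\overline{\mathcal{I}_Z(b)})=\widehat{c}_1(\overline{\mathcal{O}_X(b)})$ of Lemma (\ref{ceins}). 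Comparing the degree-two components therefore leaves
\begin{equation*}
\widehat{c}_2(\overline{\mathcal{I}_Z(b)})=-\widehat{c}_2(\overline{\mathcal{O}_Z}).
\end{equation*}

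It then remains to compute $\widehat{c}_2(\overline{\mathcal{O}_Z})$ as an element of $\widehat{CH}^2(X)\cong\mathbb{R}$. Because $\mathcal{O}_Z$ has empty support on $X(\mathbb{C})$ there is no archimedean (Green current) contribution, so its arithmetic second Chern class is carried entirely by the vertical zero-cycle underlying $Z$. As in the classical computation for a torsion sheaf supported in codimension two (e.g.\ via the Koszul resolution), one has $\widehat{c}_2(\overline{\mathcal{O}_Z})=-[Z]$, where $[Z]=\sum_{z\in Z}\mathrm{length}_{\mathcal{O}_{Z,z}}(\mathcal{O}_{Z,z})\,[z]$. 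Applying the arithmetic degree $\widehat{\deg}:\widehat{CH}^2(X)\to\mathbb{R}$, which sends a closed point $z$ lying over a prime to $\log\#k(z)$, and using that for the Artinian local ring $\mathcal{O}_{Z,z}$ one has $\log\#\mathcal{O}_{Z,z}=\mathrm{length}(\mathcal{O}_{Z,z})\cdot\log\#k(z)$, gives $\widehat{\deg}[Z]=\sum_z\log\#\mathcal{O}_{Z,z}=\log\#H^0(Z,\mathcal{O}_Z)$. Combined with the previous display this yields the claim.

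The step I expect to be the main obstacle is the last one: pinning down the definition of $\widehat{c}_2$ for the non-locally-free coherent sheaves $\mathcal{I}_Z(b)$ and $\mathcal{O}_Z$ within the Gillet--Soul\'e/Moriwaki formalism for Hermitian coherent sheaves, and then establishing the arithmetic Riemann-Roch type identification $\widehat{c}_2(\overline{\mathcal{O}_Z})=-[Z]$ together with the correct normalization of $\widehat{\deg}$. If the additivity for coherent sheaves turns out to be delicate, one safe alternative is to replace $\mathcal{I}_Z$ and $\mathcal{O}_Z$ by the local Koszul resolution attached to the regular sequence generating $\mathcal{I}_Z$ (available since $Z$ is a local complete intersection), to compute with genuine Hermitian vector bundles, and to read off $\widehat{c}_2$ from there; the archimedean terms again drop out because the resolution splits orthogonally over $X(\mathbb{C})$.
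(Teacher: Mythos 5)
Your argument is correct in outline and lands on the right answer with the right signs, but it is genuinely different from the paper's, and the step you yourself flag as the main obstacle is precisely what the paper's citation disposes of. The paper's proof is two lines: Moriwaki's formula $\widehat{c}_2(\mathcal{F},h)=\widehat{c}_2(\mathcal{F}^{**},h)+log(\#(\mathcal{F}^{**}/\mathcal{F}))$ for a torsion free Hermitian coherent sheaf on an arithmetic surface, \cite[Corollary 8.9.]{mori1}, applied to $\mathcal{F}=\mathcal{I}_Z(b)$ with $\mathcal{F}^{**}=\mathcal{O}_X(b)$ (so $\widehat{c}_2(\mathcal{F}^{**})=0$ as it has rank one) and $\mathcal{F}^{**}/\mathcal{F}\cong\mathcal{O}_Z$. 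Your Whitney-formula computation on the sequence (\ref{idealseq}) is essentially that corollary unpacked: the double-dual sequence $0\to\mathcal{F}\to\mathcal{F}^{**}\to\mathcal{F}^{**}/\mathcal{F}\to 0$ \emph{is} (\ref{idealseq}) in this case, and your conclusion $\widehat{c}_2(\overline{\mathcal{I}_Z(b)})=[Z]$ with $\widehat{deg}[Z]=log(\#H^0(Z,\mathcal{O}_Z))$ matches the paper's degree formula in the remark following the lemma. What your version buys is transparency: the cycle $[Z]$ appears explicitly, and the degree-one component of the same identity recovers Lemma \ref{ceins} as a consistency check. What it costs is that the foundational inputs are only asserted, not established: that $\widehat{c}_2$ is defined for the non-locally-free Hermitian coherent sheaves $\mathcal{I}_Z(b)$ and $\mathcal{O}_Z$, that the arithmetic Whitney formula applies to this sequence of coherent sheaves, and that $\widehat{c}_2(\overline{\mathcal{O}_Z})=-[Z]$ with no archimedean contribution. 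Your supporting observations are correct --- $Z$ consists of closed points over nonzero primes, so $Z(\mathbb{C})=\emptyset$, the sequence is isometrically split at infinity and no Bott-Chern term enters, and $log(\#\mathcal{O}_{Z,z})=\mathrm{length}(\mathcal{O}_{Z,z})\cdot log(\#k(z))$ gives the right degree --- but to make the argument self-contained you would actually have to carry out the Koszul-resolution fallback you sketch, whereas citing \cite[Corollary 8.9.]{mori1} as the paper does short-circuits all of it.
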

\begin{proof}
It is known that if $\mathcal{F}$ is a torsion free coherent sheaf on $X$, then one has 
\begin{center}
$\widehat{c}_2(\mathcal{F},h)=\widehat{c}_2(\mathcal{F}^{**},h)+log(\#(\mathcal{F}^{**}/\mathcal{F}))$, 
\end{center}
see (\cite[Corollary 8.9.]{mori1}).\\ 
We apply this to $\mathcal{F}=\mathcal{I}_Z(b)$. Since $\mathcal{O}_X(b)$ has rank one $\widehat{c}_2(\mathcal{O}_X(b))=0$, see (\cite[4.9.]{soul}). Furthermore $\#(\mathcal{O}_X(b)/\mathcal{I}_Z(b))=\#H^0(Z,\mathcal{O}_Z)$ as $Z$ has codimension two in $X$. So we get the result. 
\end{proof}
\begin{rem}
\normalfont Here we use the ususal notation, which says we can identify an element $x\in \widehat{CH}$$^2(X)$ with its degree $\widehat{deg}(x)\in\mathbb{R}$, see (\cite[Section 2]{soul4}). So $x$ may denote the element in the second Chow group or its degree. It will be clear from the context what meaning it has in the case at hand. For the sake of completeness we give the definition of the degree function:\\ 
if $x\in \widehat{CH}$$^2(X)$, then $x=(Z,g)$ and we have 
\begin{center}
$\widehat{deg}(x)=log(\#(H^0(Z,\mathcal{O}_Z)))+\frac{1}{2}\displaystyle\int_{X(\mathbb{C})} g$. 
\end{center}
So we could also write $\widehat{c}_2(\overline{\mathcal{I}_Z(b)})=Z$ for the second arithmetic Chern class.
\end{rem}

\begin{thm}\label{chern}
Let $\mathcal{E}$ be a locally free sheaf of rank two on $X$. Then 
\begin{align*}
\widehat{c}_1(\overline{\mathcal{E}})&=\widehat{c}_1(\overline{\mathcal{O}_X(a+b)})\\
\widehat{c}_2(\overline{\mathcal{E}})&=\widehat{c}_1(\overline{\mathcal{O}_X(a)})\widehat{c}_1(\overline{\mathcal{O}_X(b)})+log(\#H^0(Z,\mathcal{O}_Z)).
\end{align*}
\end{thm}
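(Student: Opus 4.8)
The plan is to leverage the observation made just before Lemma \ref{ceins}: by our choice of the orthogonal direct sum metric, the exact sequence
\[
0 \to \mathcal{O}_X(a) \to \mathcal{E} \to \mathcal{I}_Z(b) \to 0
\]
is metrically split on $\mathbb{C}\mathbb{P}^1$, so all Bott-Chern secondary classes vanish. This is precisely what is needed to apply the Whitney sum formula for arithmetic Chern classes \emph{without} correction terms, giving the multiplicativity
\[
\widehat{c}(\overline{\mathcal{E}}) = \widehat{c}(\overline{\mathcal{O}_X(a)}) \cdot \widehat{c}(\overline{\mathcal{I}_Z(b)}).
\]
First I would expand both factors, using that $\overline{\mathcal{O}_X(a)}$ is a line bundle (hence $\widehat{c}(\overline{\mathcal{O}_X(a)}) = 1 + \widehat{c}_1(\overline{\mathcal{O}_X(a)})$, with $\widehat{c}_2 = 0$), while $\widehat{c}(\overline{\mathcal{I}_Z(b)}) = 1 + \widehat{c}_1(\overline{\mathcal{I}_Z(b)}) + \widehat{c}_2(\overline{\mathcal{I}_Z(b)})$.

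Reading off the degree-one part gives $\widehat{c}_1(\overline{\mathcal{E}}) = \widehat{c}_1(\overline{\mathcal{O}_X(a)}) + \widehat{c}_1(\overline{\mathcal{I}_Z(b)})$. I would then substitute Lemma \ref{ceins} to replace $\widehat{c}_1(\overline{\mathcal{I}_Z(b)})$ by $\widehat{c}_1(\overline{\mathcal{O}_X(b)})$, and use additivity of $\widehat{c}_1$ under tensor products of Hermitian line bundles, together with the fact that the Fubini-Study metrics are defined so that $\overline{\mathcal{O}_X(a)} \otimes \overline{\mathcal{O}_X(b)} \cong \overline{\mathcal{O}_X(a+b)}$, to conclude $\widehat{c}_1(\overline{\mathcal{E}}) = \widehat{c}_1(\overline{\mathcal{O}_X(a+b)})$. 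For the degree-two part, the Whitney formula gives $\widehat{c}_2(\overline{\mathcal{E}}) = \widehat{c}_1(\overline{\mathcal{O}_X(a)})\,\widehat{c}_1(\overline{\mathcal{I}_Z(b)}) + \widehat{c}_2(\overline{\mathcal{I}_Z(b)})$, the term $\widehat{c}_2(\overline{\mathcal{O}_X(a)})$ vanishing since it is a line bundle. Substituting Lemma \ref{ceins} into the product and Lemma \ref{czwei} into the remaining summand then yields exactly the claimed formula.

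The point requiring the most care is the justification of the unqualified Whitney formula for the Hermitian coherent sheaf $\overline{\mathcal{I}_Z(b)}$, which is only torsion free, not locally free. I would want to be confident that the definition of $\widehat{c}_i$ for such sheaves (via the double dual, as already used in Lemmas \ref{ceins} and \ref{czwei}) is compatible with multiplicativity along the split sequence. The hard part is therefore conceptual rather than computational: once the orthogonal splitting on $E = O_{\mathbb{C}\mathbb{P}^1}(a) \oplus O_{\mathbb{C}\mathbb{P}^1}(b)$ forces the Bott-Chern classes to vanish, no secondary class ever has to be evaluated, and the entire argument collapses into bookkeeping of the two lemmas.
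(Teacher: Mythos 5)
Your proposal is correct and follows essentially the same route as the paper: exploit the metric splitting of the associated sequence on $\mathbb{C}\mathbb{P}^1$ to kill the Bott--Chern secondary class, then substitute Lemmas \ref{ceins} and \ref{czwei}. The only difference is packaging: the paper works with additivity of the arithmetic Chern character $\widehat{ch}$ rather than multiplicativity of the total class $\widehat{c}$, reading (\ref{exseq1}) as a metrized locally free resolution of $\mathcal{I}_Z(b)$ and invoking the Gillet--Soul\'{e} result that $\widehat{ch}$ of a coherent sheaf computed from such a resolution equals the alternating sum of the $\widehat{ch}$ of the terms once the (here vanishing) Bott--Chern class is accounted for. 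That formulation is precisely what discharges the point you flag as delicate: one never needs a Whitney formula for the non-locally-free sheaf $\overline{\mathcal{I}_Z(b)}$ itself, only the identity $\widehat{ch}_2=\tfrac{1}{2}\widehat{c}_1^{\,2}-\widehat{c}_2$ to translate back into Chern classes, which produces your cross-term $\widehat{c}_1(\overline{\mathcal{O}_X(a)})\widehat{c}_1(\overline{\mathcal{O}_X(b)})$ after a one-line computation.
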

\begin{proof}
Since the exact sequence (\ref{exseq1}) is a locally free resolution of $\mathcal{I}_Z(b)$, we get 
\begin{center}
$\widehat{ch}(\overline{\mathcal{I}_Z(b)})=\widehat{ch}(\overline{\mathcal{E}})-\widehat{ch}(\overline{\mathcal{O}_X(a)})$
\end{center}
by (\cite[Corollary 3.1.4]{soul2}). Denote by $\widehat{ch}_i(\mathcal{E})$ the $\widehat{CH}$$^{i}(X)_{\mathbb{Q}}$-part of $\widehat{ch}(\mathcal{E})$. So for $i=1$ we get $\widehat{ch}_1(\mathcal{E})=\widehat{c}_1(\mathcal{E})$. Putting everything in we see 
\begin{center}
$\widehat{c}_1(\overline{\mathcal{E}})=\widehat{c}_1(\overline{\mathcal{O}_X(a)})+\widehat{c}_1(\overline{\mathcal{I}_Z(b)})$. 
\end{center}
Using lemma (\ref{ceins}) and (\cite[4.1.4]{soul}) we get $\widehat{c}_1(\overline{\mathcal{E}})=\widehat{c}_1(\overline{\mathcal{O}_X(a+b)})$.\\
Furthermore $\widehat{ch}_2(\overline{\mathcal{E}})$ is given by $\widehat{ch}_2(\overline{\mathcal{E}})=\frac{1}{2}\widehat{c}_1(\overline{\mathcal{E}})^2-\widehat{c}_2(\overline{\mathcal{E}})$ and we get 
\begin{center}
$\widehat{ch}_2(\overline{\mathcal{I}_Z(b)})=\frac{1}{2}\widehat{c}_1(\overline{\mathcal{O}_X(b)})^2-log(\#H^0(Z,\mathcal{O}_Z))$, 
\end{center}
using lemmas (\ref{ceins}) and (\ref{czwei}).\\
We also have $\widehat{ch}_2(\overline{\mathcal{O}_X(a)})=\frac{1}{2}\widehat{c}_1(\overline{\mathcal{O}_X(a)})^2$ since $\widehat{c}_2(\overline{\mathcal{O}_X(a)})=0$ by \cite[4.9]{soul}. Putting everything in, a short computation gives the result.
\end{proof}
\begin{rem}
\normalfont A first nontrivial example, this means that the exact sequence of associated vector bundles is not split, is given as follows: choose a Hermitian metric $h$ on $E$ that is not the orthogonal direct sum metric. Equip the the line bundles $L_1$ and $L_2$ with the submetric induced by h, respectively the quotient metric induced by $h$ by regarding $L_2$ as the orthogonal complement of $L_1$. Then $\widehat{c}_1$ is still additive on exact sequences, see (\cite[1.2.5.]{soul}). But in this case the secondary Bott-Chern class for $\widehat{c}_2$ does not vanish in general and one can write down a formula, see for example (\cite[Lemma 2.1.1.]{soul4}) or (\cite[Theorem 5.1.]{gasb}).
\end{rem} 
Using the arithmetic Chern classes, we can define the arithmetic discriminant $\widehat{\Delta}(\overline{\mathcal{E}})$ of a locally free sheaf $\mathcal{E}$ of rank two on $X$. If the sheaf has Chern classes $\widehat{c}_1$ and $\widehat{c}_2$, then the discriminat is defined by $\widehat{\Delta}=4\widehat{c}_2-\widehat{c}_1^2$.
\begin{lem}
Let $\mathcal{E}$ be a locally free sheaf of rank two on $X$, then 
\begin{center}
$\widehat{\Delta}(\overline{\mathcal{E}})=4log(\#H^0(Z,\mathcal{O}_Z))-\frac{1}{2}(a-b)^2$.
\end{center}
\end{lem}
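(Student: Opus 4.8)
The plan is to substitute the formulas from Theorem \ref{chern} directly into the definition $\widehat{\Delta}=4\widehat{c}_2-\widehat{c}_1^2$. By Theorem \ref{chern} we have
\begin{equation*}
\widehat{c}_2(\overline{\mathcal{E}})=\widehat{c}_1(\overline{\mathcal{O}_X(a)})\widehat{c}_1(\overline{\mathcal{O}_X(b)})+\log(\#H^0(Z,\mathcal{O}_Z))
\end{equation*}
and $\widehat{c}_1(\overline{\mathcal{E}})=\widehat{c}_1(\overline{\mathcal{O}_X(a+b)})$, so the first step is simply to write
\begin{equation*}
\widehat{\Delta}(\overline{\mathcal{E}})=4\widehat{c}_1(\overline{\mathcal{O}_X(a)})\widehat{c}_1(\overline{\mathcal{O}_X(b)})+4\log(\#H^0(Z,\mathcal{O}_Z))-\widehat{c}_1(\overline{\mathcal{O}_X(a+b)})^2.
\end{equation*}

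The real work is then purely in the arithmetic intersection theory of line bundles on $X$, and reduces to understanding the products $\widehat{c}_1(\overline{\mathcal{O}_X(m)})\widehat{c}_1(\overline{\mathcal{O}_X(n)})$ in $\widehat{CH}^2(X)$. First I would use the additivity of $\widehat{c}_1$ together with the fact that $\widehat{c}_1(\overline{\mathcal{O}_X(m)})=m\,\widehat{c}_1(\overline{\mathcal{O}_X(1)})$, which follows from the multiplicativity of the Fubini-Study metric under tensor powers (compatible with \cite[4.1.4]{soul}). Writing $\omega:=\widehat{c}_1(\overline{\mathcal{O}_X(1)})$, the whole expression collapses to a multiple of the self-intersection number $\omega^2=\widehat{c}_1(\overline{\mathcal{O}_X(1)})^2\in\widehat{CH}^2(X)$, since
\begin{equation*}
4(ab)\,\omega^2-(a+b)^2\,\omega^2+4\log(\#H^0(Z,\mathcal{O}_Z))=-(a-b)^2\,\omega^2+4\log(\#H^0(Z,\mathcal{O}_Z)).
\end{equation*}
Thus everything hinges on the single number $\omega^2$, which is the arithmetic self-intersection of the Fubini-Study--metrized hyperplane bundle on $\mathbb{P}^1_{\mathbb{Z}}$.

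The main obstacle, and the only substantive computation, is to show that $\widehat{\deg}(\omega^2)=\tfrac{1}{2}$. This is the standard value of the arithmetic degree of $\widehat{c}_1(\overline{\mathcal{O}(1)})^2$ for the Fubini-Study metric; I would compute it by choosing a rational section $s$ of $\mathcal{O}_X(1)$ (say $s=x_0$), so that $\widehat{c}_1(\overline{\mathcal{O}_X(1)})=(\operatorname{div}(x_0),-\log h(x_0,x_0))$ as in the remark after Lemma \ref{ceins}, and then computing the star-product of this class with itself. The geometric contribution comes from the intersection of two sections on the fibers, while the archimedean contribution is the integral $\tfrac{1}{2}\int_{\mathbb{C}\mathbb{P}^1}(-\log h(x_0,x_0))\,c_1(\overline{\mathcal{O}(1)})$ of the Fubini-Study Green current against the Fubini-Study form, a classical integral evaluating to give the total $\tfrac{1}{2}$. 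Once $\widehat{\deg}(\omega^2)=\tfrac12$ is in hand, substituting back and passing to degrees yields
\begin{equation*}
\widehat{\Delta}(\overline{\mathcal{E}})=4\log(\#H^0(Z,\mathcal{O}_Z))-\tfrac{1}{2}(a-b)^2,
\end{equation*}
which is the claim. I would expect this archimedean integral to be the one genuinely non-formal step; the rest is bookkeeping with the bilinearity of the arithmetic intersection pairing.
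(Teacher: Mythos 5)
Your proposal is correct and follows essentially the same route as the paper: substitute the Chern classes from Theorem \ref{chern} into $\widehat{\Delta}=4\widehat{c}_2-\widehat{c}_1^2$, use bilinearity to reduce to $-(a-b)^2\widehat{c}_1(\overline{\mathcal{O}_X(1)})^2+4\log(\#H^0(Z,\mathcal{O}_Z))$, and evaluate via $f_{*}(\widehat{c}_1(\overline{\mathcal{O}_X(1)})^2)=\tfrac{1}{2}$. The only cosmetic difference is that you offer to recompute this self-intersection by a star-product/Green-current integral, whereas the paper simply cites it (equation (\ref{oxtwo}), from Gillet--Soul\'e); your sketch of that computation is sound.
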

\begin{rem}
\normalfont Using the arithmetic discriminant one can study arithmetic stability respectively unstability, see (\cite{mori3}). Furthermore one can prove an arithmetic analogue of the Bogomolov-Gieseker inequality for semistable vector bundles on arithmetuic surfaces, see (\cite{mori1}).
\end{rem}

\subsection{Arithmetic Riemann-Roch}
In this section we want to recall the arithmetic Riemann-Roch theorem. For this purpose let $g: W\rightarrow Z$ be a smooth projective morphism of arithmetic varieties over $\mathbb{Z}$. Choose a Hermitian metric $h_{W/Z}$ on the relative tangent sheaf $T_{W/Z}$ which induces a K\"ahler metric on each fiber $g^{-1}(z)$ for $z\in Z(\mathbb{C})$. This defines a Hermitian vector bundle $\overline{T_{W/Z}}$. Now given a Hermitian vector bundle $\overline{\mathcal{E}}=(\mathcal{E},h)$ on $W$ we can define a line bundle $\lambda(\mathcal{E})$ on $Z$ by 
\begin{center}
$\lambda(\mathcal{E}):=\bigotimes\limits_{i\geq 0}det(R^ig_{*}\mathcal{E})^{(-1)^i}$. 
\end{center}
We equip $\lambda(\mathcal{E})$ with the Quillen metric $h_Q$ and get a Hermitian line bundle $\overline{\lambda(\mathcal{E})}$ on $Z$. For more information about $\lambda(\mathcal{E})$ and the Quillen metric see (\cite[VI]{soul3}).\\
Then the arithmetic Riemann-Roch has the following form:
\begin{thm}\label{arr}
There is the following equality in $\widehat{CH}$$^1(Z)$:
\begin{center}
$\widehat{c}_1(\overline{\lambda(\mathcal{E})})=g_{*}(\widehat{ch}(\overline{\mathcal{E}})\widehat{Td}$$^A(\overline{T_{W/Z}}))^{(1)}$
\end{center}
\end{thm}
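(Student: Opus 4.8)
The plan is to recognize the statement as the arithmetic Riemann--Roch theorem of Gillet and Soul\'e, so in an expository treatment the honest route is to deduce it from their general theorem rather than to reconstruct the proof; nonetheless, let me indicate the genuine architecture behind it. The identity compares the arithmetic first Chern class of the determinant of cohomology $\overline{\lambda(\mathcal{E})}$, equipped with the Quillen metric, with the degree-one component of the arithmetic pushforward $g_*(\widehat{ch}(\overline{\mathcal{E}})\,\widehat{Td}^A(\overline{T_{W/Z}}))$. The superscript $A$ is essential: it signals that the arithmetic Todd class has been modified so as to absorb the Gillet--Soul\'e $R$-genus, which is exactly the device that closes the gap between the naive arithmetic characteristic classes and the analytic side and lets the formula be written without an extra correction term.

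First I would fix the analytic foundations. By definition the Quillen metric $h_Q$ on $\lambda(\mathcal{E})$ is the $L^2$-metric---determined by the fiberwise K\"ahler metric coming from $h_{W/Z}$ and the Hermitian metric $h$ on $\mathcal{E}$---twisted by the exponential of the Ray--Singer analytic torsion of the fiberwise Dolbeault complex. The central analytic input is then the curvature theorem of Bismut--Gillet--Soul\'e: the first Chern form of $(\lambda(\mathcal{E}),h_Q)$ equals the appropriate component of the fiber integral $\int_{W/Z} ch(\overline{\mathcal{E}})\,Td(\overline{T_{W/Z}})$. This already matches the archimedean (Green's current) parts of the two sides of the desired equality at the level of differential forms.

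Next I would address the finite part in $\widehat{CH}^1(Z)$. One exploits the defining exact sequence relating $\widehat{CH}^1(Z)$ to $CH^1(Z)$ and to a quotient of real forms by the images of $\partial$ and $\bar\partial$: the Bismut--Gillet--Soul\'e formula controls the analytic component, while the algebraic component is governed by the classical Grothendieck--Riemann--Roch theorem for $g$. Matching the two pieces is not formal, since the analytic torsion is not a topological quantity; the discrepancy is measured precisely by the $R$-genus. Gillet and Soul\'e pin down this correction through their key compatibility lemma---stability of the construction under composition and under closed immersions, proved via deformation to the normal cone and an arithmetic Riemann--Roch for immersions---which is what legitimizes replacing $\widehat{Td}$ by $\widehat{Td}^A$.

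The hard part, by a wide margin, is the analytic curvature theorem together with the anomaly and compatibility formulas for the analytic torsion; these are the deep heat-kernel and local index theory inputs that make the statement genuinely analytic rather than a formal shadow of its geometric counterpart. For the purposes of this paper the theorem is therefore cited from the work of Gillet and Soul\'e (and from Soul\'e's lectures), and in the next section it will be applied only in the fully explicit case $g=f\colon X\to Y$ with $X=\mathbb{P}^1_{\mathbb{Z}}$ and $Y=\mathrm{Spec}(\mathbb{Z})$, where every term can be evaluated by hand.
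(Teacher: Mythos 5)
Your proposal matches the paper's treatment: the theorem is not proved here but is quoted from Gillet and Soulé (the paper's remark immediately following the statement points to the original conjecture and to the proof in their later work), and your sketch of the underlying architecture — Quillen metric, the Bismut--Gillet--Soulé curvature theorem, and the $R$-genus correction absorbed into $\widehat{Td}^A$ — is an accurate account of what that citation contains. Nothing further is required.
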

Here given $\alpha\in \widehat{CH}(Z)_\mathbb{Q}$ we denote by $\alpha^{(1)}$ its component in $\widehat{CH}$$^{1}(Z)\otimes_{\mathbb{Z}}\mathbb{Q}$. In addition $\widehat{Td}$$^A$ is the so called arithmetic Todd genus, it is defined for a Hermitian vector bundle $\overline{\mathcal{E}}$ on $W$ by 
\begin{center}
$\widehat{Td}$$^A(\overline{\mathcal{E}})=\widehat{Td}(\overline{\mathcal{E}})(1-a(R(E)))$
\end{center}
in $\widehat{CH}(W)_\mathbb{Q}$. The class $R(E)$ is a new characteristic class for the vector bundle $E$ on $W(\mathbb{C})$. This class lives in the even complex cohomology part, that is $R(E)\in H^{ev}(W(\mathbb{C}))$. If $E=L$ is a line bundle, one has the following formula:
\begin{center}
$R(L)=\sum\limits_{\genfrac{}{}{0pt}{}{k\hspace{0.1cm}\text{odd}}{k\geq 1}} (2\zeta'(-k)+\zeta(-k)(1+\frac{1}{2}+\cdots+\frac{1}{k}))\frac{c_1(L)^k}{k!}$
\end{center}
\begin{rem}
\normalfont The arithmetic Riemann-Roch theorem (\ref{arr}) was conjectured in (\cite{soul5}). For a proof of this theorem (even in a far more general setting) see (\cite[Theorem 4.1.4.7]{soul2}). For more information about the arithmetic Chow ring $\widehat{CH}(W)$, the pushforward map $g_{*}$ and the map $a$ we refer to (\cite[III]{soul3}). 
\end{rem}
If $\overline{L}=(L,h)$ is a Hermitian line bundle on $Y=Spec(\mathbb{Z})$, we define its Arakelov-degree by: 
\begin{center}
$\widehat{deg}(\overline{L}):=log(\#(L/s\mathbb{Z}))-\frac{1}{2}log(h(s,s))$
\end{center}
where $s$ is a nontrivial global section of $L$. This defines a morphism $\widehat{Pic}(Y)\rightarrow \mathbb{R}$.\\
If $\overline{\mathcal{F}}$ is a Hermitian coherent sheaf on $Y$, that is a finitely generated $\mathbb{Z}$-module with a metric on $\mathcal{F}\otimes_{\mathbb{Z}}\mathbb{C}$, we define $\widehat{deg}(\overline{\mathcal{F}}):=\widehat{deg}(\overline{det(\mathcal{F})})$. One can easily show that this degree is given by
\begin{equation}\label{degree}
\widehat{deg}(\overline{\mathcal{F}})=log(\#(\mathcal{F}/(s_1\mathbb{Z}+\cdots+s_n\mathbb{Z})))-\frac{1}{2}log(det(h(s_i,s_j)))
\end{equation}
with $s_1,\ldots,s_n\in \mathcal{F}$ such that these elements form a basis of $\mathcal{F}\otimes_{\mathbb{Z}}\mathbb{Q}$.\\ 
There is also a description of the degree in terms of torsion and volume:
\begin{equation}\label{degvol}
\widehat{deg}(\overline{\mathcal{F}})=\log(\#\mathcal{F}_{tors})-log(vol(\mathcal{F}_{\mathbb{C}}^{+}/\mathcal{F})).
\end{equation}
Here the volume is the one induced by the Hermitian metric, $(\mathcal{F}_{\mathbb{C}})^{+}$ denotes the subspace fixed by the complex conjugation and the volume is measured of its quotient by the lattice $\mathcal{F}/\mathcal{F}_{tors}$, see for example (\cite[Lemma VIII.1]{soul3}) or (\cite[4.1.5.]{soul2}).\\
We also have a degree morphism $\widehat{deg}: \widehat{CH}^1(Y)\rightarrow \mathbb{R}$ given by 
\begin{center}
$\widehat{deg}(\sum n_ip_i,\mu):=\sum n_ilog(p_i)+\mu/2$.
\end{center}
A short computation shows that the different degree functions are compatible with the arithmetic first Chern class on $Y=Spec(\mathbb{Z})$: if $\overline{L}$ is an arithmetic line bundle on $Y$, then we have
\begin{equation}\label{degcone}
\widehat{deg}(\widehat{c}_1(\overline{L}))=\widehat{deg}(\overline{L}).
\end{equation}

\subsubsection{The arithmetic Hirzebruch-Riemann-Roch}
If we apply $\widehat{deg}$ to (\ref{arr}) in our situation $g=f$, $W=X$ and $Z=Y$ we get an arithmetic analogue of the Hirzebruch-Riemann-Roch theorem:\\
Using (\ref{degcone}) and the definition of the Hermitian line bundle $\overline{\lambda(\mathcal{E})}$ the left hand side gives:
\begin{center}
$\widehat{deg}(\widehat{c}_1(\overline{\lambda(\mathcal{E})}))=\widehat{deg}(H^0(X,\mathcal{E}),h_{L^2})-\widehat{deg}(H^1(X,\mathcal{E}),h_{L^2})+\frac{1}{2}T(E)$.
\end{center}
Here $T(E)$ is the Ray-Singer analytic torsion of the Hermitian vector bundle $E$ on $X(\mathbb{C})$.\\
We can define two arithmetic Euler characteristics:
\begin{itemize}
\item The $Q$-Euler characteristic $\chi_Q(\overline{\mathcal{E}}):=\widehat{deg}(\widehat{c}_1(\overline{\lambda(\mathcal{E})}))$
\item The $L^2$-Euler characteristic $\chi_{L^2}(\overline{\mathcal{E}})=\widehat{deg}(H^0(X,\mathcal{E}),h_{L^2})-\widehat{deg}(H^1(X,\mathcal{E}),h_{L^2})$.
\end{itemize}
They are obviously related by $\chi_Q(\overline{\mathcal{E}})=\chi_{L^2}(\overline{\mathcal{E}})+\frac{1}{2}T(E)$.\\
Now we can state the arithmetic Hirzebruch-Riemann-Roch theorem:
\begin{cor}
If $\overline{\mathcal{E}}$ is a Hermitian vector bundle on $X$, then we have:
\begin{center}
$\chi_Q(\overline{\mathcal{E}})=\widehat{deg}(f_{*}(\widehat{ch}(\overline{\mathcal{E}})\widehat{Td}$$^A(\overline{T_{X/Y}}))^{(1)})$
\end{center}
\end{cor}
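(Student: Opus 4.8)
The plan is to obtain this corollary as an immediate consequence of the arithmetic Riemann-Roch theorem (\ref{arr}), by applying the degree morphism on $Y$ to both sides. First I would specialize Theorem (\ref{arr}) to the present situation, taking $g=f$, $W=X$ and $Z=Y=Spec(\mathbb{Z})$. For a Hermitian vector bundle $\overline{\mathcal{E}}$ on $X$ this yields the identity
\[
\widehat{c}_1(\overline{\lambda(\mathcal{E})})=f_{*}(\widehat{ch}(\overline{\mathcal{E}})\widehat{Td}^A(\overline{T_{X/Y}}))^{(1)}
\]
in $\widehat{CH}^1(Y)$.

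Next I would apply the degree morphism $\widehat{deg}\colon\widehat{CH}^1(Y)\rightarrow\mathbb{R}$ to both sides. Since $\widehat{deg}$ is a well defined map on $\widehat{CH}^1(Y)$, the equality is preserved and I obtain
\[
\widehat{deg}(\widehat{c}_1(\overline{\lambda(\mathcal{E})}))=\widehat{deg}(f_{*}(\widehat{ch}(\overline{\mathcal{E}})\widehat{Td}^A(\overline{T_{X/Y}}))^{(1)}).
\]
Finally, by the very definition $\chi_Q(\overline{\mathcal{E}}):=\widehat{deg}(\widehat{c}_1(\overline{\lambda(\mathcal{E})}))$ introduced above, the left-hand side is exactly $\chi_Q(\overline{\mathcal{E}})$, which gives the asserted formula.

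I do not expect any genuine obstacle in this argument: all of the substance is already contained in Theorem (\ref{arr}), and the corollary is nothing more than its image under the degree map on $Y=Spec(\mathbb{Z})$. The only point that deserves a moment's attention is the bookkeeping of the two degree functions: the $\widehat{deg}$ appearing in the definition of $\chi_Q$ and the one on the right-hand side must be identified as the same morphism $\widehat{CH}^1(Y)\rightarrow\mathbb{R}$. This is guaranteed by the compatibility (\ref{degcone}) between $\widehat{c}_1$ and the Arakelov degree of a line bundle on $Y$, together with the fact that $\lambda(\mathcal{E})$ is a line bundle on $Y$.
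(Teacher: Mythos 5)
Your proposal is correct and follows exactly the route the paper takes: specialize Theorem (\ref{arr}) to $g=f$, $W=X$, $Z=Y$, apply the degree morphism $\widehat{deg}\colon\widehat{CH}^1(Y)\rightarrow\mathbb{R}$ to both sides, and identify the left-hand side with $\chi_Q(\overline{\mathcal{E}})$ by its definition. Your remark on reconciling the two degree functions via (\ref{degcone}) matches the paper's own appeal to that compatibility, so there is nothing to add.
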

Next we want to analyze the term on the right hand side:\\ 
Using the definition of $\widehat{Td}$$^A$ and the properties of $a$ we get
\begin{center}
$\widehat{ch}(\overline{\mathcal{E}})\widehat{Td}$$^A(\overline{T_{X/Y}})=\widehat{ch}(\overline{\mathcal{E}})\widehat{Td}(\overline{T_{X/Y}})-a(ch(E)R(T_{X(\mathbb{C})})Td(T_{X(\mathbb{C})}))$.
\end{center}
Now applying $f_{*}$ and looking at the component in $\widehat{CH}$$^1(Y)_{\mathbb{Q}}$ we see that the first term is given by
\begin{center}
$f_{*}(\widehat{ch}(\overline{\mathcal{E}})\widehat{Td}(\overline{T_{X/Y}}))^{(1)}=f_{*}(\frac{1}{2}\widehat{c}_1(\overline{\mathcal{E}})^2-\widehat{c}_2(\overline{\mathcal{E}})+\frac{1}{2}\widehat{c}_1(\overline{\mathcal{E}})\widehat{c}_1(\overline{T_{X/Y}})+\frac{rk(\mathcal{E})}{12}\widehat{c}_1(\overline{T_{X/Y}})^2)$.
\end{center}
Since there is an isometry $\overline{T_{X/Y}}\cong\overline{\mathcal{O}_X(2)}$, the term simplifies to:
\begin{center}
$f_{*}(\widehat{ch}(\overline{\mathcal{E}})\widehat{Td}(\overline{T_{X/Y}}))^{(1)}=f_{*}(\frac{1}{2}\widehat{c}_1(\overline{\mathcal{E}})^2-\widehat{c}_2(\overline{\mathcal{E}})+\widehat{c}_1(\overline{\mathcal{E}})\widehat{c}_1(\overline{\mathcal{O}_X(1)})+\frac{rk(\mathcal{E})}{3}\widehat{c}_1(\overline{\mathcal{O}_X(1)})^2)$.
\end{center}
But it is well known that we have 
\begin{equation}\label{oxtwo}
f_{*}(\widehat{c}_1(\overline{\mathcal{O}_X(1)})^2)=\frac{1}{2}, 
\end{equation}
see for example (\cite[4.1.7.(23)]{soul2}). So we finally have
\begin{center}
$f_{*}(\widehat{ch}(\overline{\mathcal{E}})\widehat{Td}(\overline{T_{X/Y}}))^{(1)}=f_{*}(\frac{1}{2}\widehat{c}_1(\overline{\mathcal{E}})^2-\widehat{c}_2(\overline{\mathcal{E}})+\widehat{c}_1(\overline{\mathcal{E}})\widehat{c}_1(\overline{\mathcal{O}_X(1)}))+\frac{rk(\mathcal{E})}{6}$.
\end{center}
On the other hand, for the second term we get after applying $\widehat{deg}\circ f_{*}$:
\begin{center}
$\widehat{deg}(f_{*}(a(ch(E)R(T_{X(\mathbb{C})})Td(T_{X(\mathbb{C})})))^{(1)})=\frac{1}{2}\displaystyle\int_{X(\mathbb{C})} (ch(E)R(T_{X(\mathbb{C})})Td(T_{X(\mathbb{C})}))^{(1)}$
\end{center}
see for example (\cite[Theorem VIII.2.2']{soul3}). With the simplified notation this reads
\begin{center}
$f_{*}(a(ch(E)R(T_{X(\mathbb{C})})Td(T_{X(\mathbb{C})})))^{(1)}=\frac{1}{2}\displaystyle\int_{X(\mathbb{C})} (ch(E)R(T_{X(\mathbb{C})})Td(T_{X(\mathbb{C})}))^{(1)}$
\end{center}
Remembering that the integral of the top Chern class gives us the Euler characteristic, that is here
\begin{center}
$\displaystyle\int_{X(\mathbb{C})} c_1(T_{X(\mathbb{C})})=\chi(X(\mathbb{C}))=2$,
\end{center}
we end up with
\begin{center}
$f_{*}(a(ch(E)R(T_{X(\mathbb{C})})Td(T_{X(\mathbb{C})})))^{(1)}=rk(E)(2\zeta'(-1)+\zeta(-1))$.
\end{center}
Summing up and using $\zeta(-1)=-\frac{1}{12}$, the right hand side is given by
\begin{center}
$f_{*}(\frac{1}{2}\widehat{c}_1(\overline{\mathcal{E}})^2-\widehat{c}_2(\overline{\mathcal{E}})+\widehat{c}_1(\overline{\mathcal{E}})\widehat{c}_1(\overline{\mathcal{O}_X(1)}))+rk(\mathcal{E})(\frac{1}{4}-2\zeta'(-1))$
\end{center}
These computations and simplifications give us the following easy formula for the arithmetic Hirzebruc-Riemann-Roch theorem:
\begin{cor}\label{ahrr}
Assume $\overline{\mathcal{E}}$ is a Hermitian vector bundle on $X$, then we have:
\begin{center}
$\chi_Q(\overline{\mathcal{E}})=f_{*}(\frac{1}{2}\widehat{c}_1(\overline{\mathcal{E}})^2-\widehat{c}_2(\overline{\mathcal{E}})+\widehat{c}_1(\overline{\mathcal{E}})\widehat{c}_1(\overline{\mathcal{O}_X(1)}))+rk(\mathcal{E})(\frac{1}{4}-2\zeta'(-1))$
\end{center}
\end{cor}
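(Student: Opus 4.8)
The plan is to obtain the formula as the $Y=\mathrm{Spec}(\mathbb{Z})$ specialization of the general arithmetic Riemann--Roch theorem (Theorem \ref{arr}), taken with $g=f$ and $W=X$, and then to apply the degree morphism $\widehat{deg}$ to both sides and unwind the right-hand side using the very explicit geometry of $X=\mathbb{P}^1_{\mathbb{Z}}$. Applying $\widehat{deg}$ to the equality of Theorem \ref{arr}, the compatibility (\ref{degcone}) together with the definition of the $Q$-Euler characteristic identifies the left-hand side with $\chi_Q(\overline{\mathcal{E}})$. Hence the entire content of the corollary is the rewriting of $\widehat{deg}(f_{*}(\widehat{ch}(\overline{\mathcal{E}})\widehat{Td}^A(\overline{T_{X/Y}}))^{(1)})$ in terms of the arithmetic Chern classes $\widehat{c}_1(\overline{\mathcal{E}})$ and $\widehat{c}_2(\overline{\mathcal{E}})$; in effect, I would simply assemble the intermediate computations already carried out in the discussion preceding the statement.

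First I would split the arithmetic Todd genus according to its definition $\widehat{Td}^A(\overline{T_{X/Y}})=\widehat{Td}(\overline{T_{X/Y}})(1-a(R(T_{X(\mathbb{C})})))$, which separates the right-hand side into a geometric part built from $\widehat{Td}(\overline{T_{X/Y}})$ and an archimedean correction coming from the $R$-genus. For the geometric part I would extract the degree-one component of $\widehat{ch}(\overline{\mathcal{E}})\widehat{Td}(\overline{T_{X/Y}})$, rewrite $\widehat{c}_1(\overline{T_{X/Y}})$ using the isometry $\overline{T_{X/Y}}\cong\overline{\mathcal{O}_X(2)}$ so that $\widehat{c}_1(\overline{T_{X/Y}})=2\,\widehat{c}_1(\overline{\mathcal{O}_X(1)})$, and then apply $f_{*}$ together with the single nontrivial pushforward identity (\ref{oxtwo}), namely $f_{*}(\widehat{c}_1(\overline{\mathcal{O}_X(1)})^2)=\tfrac{1}{2}$. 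This produces the three displayed main terms in $\widehat{c}_1(\overline{\mathcal{E}})^2$, $\widehat{c}_2(\overline{\mathcal{E}})$ and $\widehat{c}_1(\overline{\mathcal{E}})\widehat{c}_1(\overline{\mathcal{O}_X(1)})$, plus the rational constant $\tfrac{rk(\mathcal{E})}{6}$.

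For the archimedean correction I would push the purely analytic class $a(ch(E)R(T_{X(\mathbb{C})})Td(T_{X(\mathbb{C})}))$ down to $Y$ and apply $\widehat{deg}$, which converts it into $\tfrac{1}{2}\int_{X(\mathbb{C})}(ch(E)R(T_{X(\mathbb{C})})Td(T_{X(\mathbb{C})}))^{(1)}$ over $X(\mathbb{C})=\mathbb{C}\mathbb{P}^1$. In the relevant top degree only the $k=1$ term $(2\zeta'(-1)+\zeta(-1))c_1(T_{X(\mathbb{C})})$ of $R(T_{X(\mathbb{C})})$ survives, paired with the degree-zero part $rk(E)$ of $ch(E)$ and the leading $1$ of $Td(T_{X(\mathbb{C})})$; together with $\int_{X(\mathbb{C})}c_1(T_{X(\mathbb{C})})=\chi(X(\mathbb{C}))=2$ this contributes $rk(E)(2\zeta'(-1)+\zeta(-1))$, entering with a minus sign because of the factor $(1-a(R))$.

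Finally I would combine the two rank constants: the geometric $\tfrac{rk(\mathcal{E})}{6}$ and the correction $-rk(\mathcal{E})(2\zeta'(-1)+\zeta(-1))$, using $\zeta(-1)=-\tfrac{1}{12}$ so that $\tfrac{1}{6}+\tfrac{1}{12}=\tfrac{1}{4}$, which yields the stated rank term $rk(\mathcal{E})(\tfrac14-2\zeta'(-1))$. I expect the only delicate step to be the bookkeeping of this archimedean correction: correctly tracking the various factors of $\tfrac12$ arising from $\widehat{deg}$, from the Euler characteristic of $\mathbb{C}\mathbb{P}^1$, and from the $\zeta$-values, so that the rational pieces $\tfrac16$ and $\tfrac1{12}$ assemble into the correct $\tfrac14$. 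By contrast the geometric part is a routine expansion of characteristic classes reduced to the one pushforward computation (\ref{oxtwo}).
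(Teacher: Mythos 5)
Your proposal is correct and follows essentially the same route as the paper: the corollary is obtained by applying $\widehat{deg}$ to Theorem \ref{arr}, splitting $\widehat{Td}^A(\overline{T_{X/Y}})$ into $\widehat{Td}(\overline{T_{X/Y}})$ and the $R$-genus correction, using the isometry $\overline{T_{X/Y}}\cong\overline{\mathcal{O}_X(2)}$ together with (\ref{oxtwo}) for the geometric part, and evaluating the archimedean term via $\int_{X(\mathbb{C})}c_1(T_{X(\mathbb{C})})=2$ before combining $\tfrac16+\tfrac1{12}=\tfrac14$ with $\zeta(-1)=-\tfrac1{12}$. All the steps and intermediate identities you invoke are exactly those assembled in the discussion preceding the statement.
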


\subsection{Analytic torsion of line bundles}
In this section we want to use the arithmetic Hirzebruch-Riemann-Roch theorem to compute the Ray-Singer analytic torsion of the Hermitian line bundle $O_{\mathbb{C}\mathbb{P}^1}(a)$ on $X(\mathbb{C})=\mathbb{C}\mathbb{P}^1$ for $a\in \mathbb{Z}$. Since we know the arithmetic Chern classes of $\overline{\mathcal{O}_X(a)}$ and $\overline{T_{X/Y}}\cong\overline{\mathcal{O}_X(2)}$, we only need to compute the Arakelov-degree of the Hermitian $\mathbb{Z}$-module $H^i(X,\mathcal{O}_X(a))$ for $i=0,1$. By (\ref{cohomline}) we know these modules and so we get the following lemma:
\begin{lem}
Assume $a\geq 0$ and equip the $\mathbb{Z}$-module $H^0(X,\mathcal{O}_X(a))$ with the $L^2$-metric induced by the Fubini-Study metrics on the manifold $X(\mathbb{C})$ and on the Hermitian line bunlde $\overline{\mathcal{O}_X(a)}$. Then we have:
\begin{center}
$\widehat{deg}(H^0(X,\mathcal{O}_X(a)),h_{L^2})=-\frac{1}{2}(log(\prod\limits_{i=0}^a \frac{i!(a-i)!}{(a+1)!}))$.
\end{center}  
\end{lem}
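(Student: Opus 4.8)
The plan is to evaluate the degree directly from the explicit formula (\ref{degree}), applied to the Hermitian $\mathbb{Z}$-module $\mathcal{F}=H^0(X,\mathcal{O}_X(a))$. By Remark \ref{excohom} the monomials $s_i:=x_0^{a-i}x_1^i$ for $0\leq i\leq a$ form a $\mathbb{Z}$-basis of $\mathcal{F}$, and in particular a basis of $\mathcal{F}\otimes_{\mathbb{Z}}\mathbb{Q}$. Hence the lattice-index term in (\ref{degree}) is trivial, $\log(\#(\mathcal{F}/(s_0\mathbb{Z}+\cdots+s_a\mathbb{Z})))=\log 1=0$, and the degree collapses to
\[
\widehat{deg}(H^0(X,\mathcal{O}_X(a)),h_{L^2})=-\frac{1}{2}\log\det(h_{L^2}(s_i,s_j)).
\]
So the entire computation reduces to determining the Gram matrix of the monomial basis in the $L^2$-inner product.

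First I would write the $L^2$-metric as $h_{L^2}(s,t)=\int_{\mathbb{C}\mathbb{P}^1}h(s,t)\,d\mu_{FS}$, where $h$ is the pointwise Fubini-Study metric on $O_{\mathbb{C}\mathbb{P}^1}(a)$ and $d\mu_{FS}$ is the Fubini-Study volume form, normalized to total mass one. On the affine chart $x_0\neq 0$ with coordinate $z=x_1/x_0$ one has $h(s_i,s_j)=z^i\bar z^{\,j}(1+|z|^2)^{-a}$ and $d\mu_{FS}=\frac{1}{\pi}(1+|z|^2)^{-2}\,dx\,dy$. Passing to polar coordinates $z=re^{i\theta}$, the decisive observation is that the angular integral of $e^{i(i-j)\theta}$ vanishes unless $i=j$; therefore the Gram matrix is \emph{diagonal}, and only the diagonal entries remain to be computed.

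The diagonal entries reduce, after the substitution $u=r^2$, to a Beta integral:
\[
h_{L^2}(s_i,s_i)=\int_0^\infty\frac{u^{i}}{(1+u)^{a+2}}\,du=B(i+1,a+1-i)=\frac{i!\,(a-i)!}{(a+1)!}.
\]
Here the prefactor works out to exactly $1$ (the factor $\tfrac{1}{\pi}$ of the volume form cancels against $2\pi\cdot\tfrac12$ from the angular and radial integrations), which is precisely where the normalization of $d\mu_{FS}$ enters. Taking the product of the diagonal entries and inserting it into the displayed reduction gives
\[
\widehat{deg}(H^0(X,\mathcal{O}_X(a)),h_{L^2})=-\frac{1}{2}\log\prod_{i=0}^a\frac{i!\,(a-i)!}{(a+1)!},
\]
as claimed.

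The only genuinely delicate point is the careful bookkeeping of the two normalizations, namely the Fubini-Study metric on $O_{\mathbb{C}\mathbb{P}^1}(a)$ and the total mass of $d\mu_{FS}$, so that the constant multiplying the Beta integral is exactly $1$ and no spurious $\log\pi$ term survives. Everything else is the standard evaluation of the Beta integral together with the vanishing of the off-diagonal Gram entries forced by rotational symmetry.
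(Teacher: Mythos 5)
Your proposal is correct and follows essentially the same route as the paper: the same monomial basis $s_i=x_0^{a-i}x_1^i$, vanishing of the lattice-index term, diagonality of the Gram matrix by rotational symmetry, and evaluation of the diagonal entries as the Beta integral $\frac{i!(a-i)!}{(a+1)!}$. The only cosmetic difference is that you name the radial integral a Beta function where the paper computes it directly after the substitution $r^2=t$.
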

\begin{proof}
Following formula (\ref{degree}) we pick $s_i=x_0^{a-i}x_1^i$ for $i=0,\ldots,a$. Then we immediately see 
\begin{center}
$log(\#(H^0(X,\mathcal{O}_X(a))/(s_0\mathbb{Z}+\cdots+s_a\mathbb{Z})))=0$. 
\end{center}
It remains to compute $det(h_{L^2}(s_i,s_j))$. The $L^2$-metric is given by 
\begin{center}
$h_{L^2}(s_i,s_j)=\displaystyle\int_{\mathbb{C}\mathbb{P}^1} h(s_i,s_j)\hspace{0.1cm}\omega$,
\end{center}
where $h$ is the Fubini Sutdy metric on $O_{\mathbb{C}\mathbb{P}^1}(a)$ and $\omega$ is the K\"ahler form associated to the Fubini-Study metric on $\mathbb{C}\mathbb{P}^1$, see (\cite[VI.3.1]{soul3}).\\ 
Now in the usual chart $(x_0\neq 0)$ with the local coordinate $z=\frac{x_1}{x_0}$ we get $s_i=z^i$, $\omega=\dfrac{\mathrm{i}}{2\pi}\dfrac{1}{(1+|z|^2)^2} dz\wedge d\overline{z}$ and $h(s_i,s_j)=\dfrac{z^i\overline{z}^j}{(1+|z|^2)^a}$. So we have to compute the integral 
\begin{center}
$\dfrac{\mathrm{i}}{2\pi}\displaystyle\int_{\mathbb{C}} \frac{z^i\overline{z}^j}{(1+|z|^2)^{a+2}} dz\wedge d\overline{z}$. 
\end{center}
Using polar coordinates for $\mathbb{C}$, we get $\dfrac{1}{\pi}\displaystyle\int_{0}^{\infty}\displaystyle\int_{0}^{2\pi} \frac{r^{i+j+1}\exp{\mathrm{i}\varphi(i-j)}}{(1+r^2)^{a+2}} drd\varphi$.\\ 
We immediately see
\begin{center}
$h_{L^2}(s_i,s_j) = \begin{cases} 2\displaystyle\int_{0}^{\infty} \frac{r^{2i+1}}{(1+r^2)^{a+2}} dr &\mbox{if } i=j\\
0 &\mbox{if } i\neq j . \end{cases}$ 
\end{center}
With the substitution $r^2=t$ we arrive at 
\begin{center}
$\displaystyle\int_{0}^{\infty} \frac{t^{i}}{(1+t)^{a+2}} dt = ((a+1)\binom{a}{i})^{-1}=\frac{i!(a-i)!}{(a+1)!}$. 
\end{center}
This implies $det(h_{L^2}(s_i,s_j))=\prod\limits_{i=0}^a \frac{i!(a-i)!}{(a+1)!}$.
\end{proof}
The right hand side of the arithmetic Hirzebruch-Riemann-Roch formula (\ref{ahrr}) can be computed using (\ref{oxtwo}). The result is:
\begin{center}
$\frac{1}{4}(a+1)^2-2\zeta'(-1)$. 
\end{center}
Putting everything together gives the mentioned result:
\begin{thm}
Assume $\mathbb{C}\mathbb{P}^1$ is equipped with the Fubini-Study metric. If the Hermitian line bundle $O_{\mathbb{C}\mathbb{P}^1}(a)$ has the induced Fubini-Study metric than we have for $a\geq 0$:
\begin{center}
$T(\mathbb{C}\mathbb{P}^1,O_{\mathbb{C}\mathbb{P}^1}(a))=\frac{1}{2}(a+1)^2+log(\prod\limits_{i=0}^a \frac{i!(a-i)!}{(a+1)!})-4\zeta'(-1)$
\end{center}
\end{thm}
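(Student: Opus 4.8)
The plan is to apply the arithmetic Hirzebruch-Riemann-Roch formula in the form of Corollary \ref{ahrr} to the Hermitian line bundle $\overline{\mathcal{O}_X(a)}$ and then to solve the resulting identity for the analytic torsion $T$. The virtue of this approach is that the left-hand side $\chi_Q(\overline{\mathcal{O}_X(a)})$ splits, via the relation $\chi_Q = \chi_{L^2} + \frac{1}{2}T$ recorded above, into an $L^2$-contribution plus half the torsion, while the right-hand side is purely a substitution of the arithmetic Chern classes of a line bundle. Since everything except $T$ has already been computed, the theorem follows by isolating $T$ in a single linear equation.

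First I would evaluate the right-hand side. For $\mathcal{E} = \mathcal{O}_X(a)$ we have $rk(\mathcal{E}) = 1$ and $\widehat{c}_2(\overline{\mathcal{O}_X(a)}) = 0$, while $\widehat{c}_1(\overline{\mathcal{O}_X(a)}) = a\,\widehat{c}_1(\overline{\mathcal{O}_X(1)})$ by additivity of the arithmetic first Chern class under tensor powers. Feeding this into Corollary \ref{ahrr}, the bracketed expression collapses to $(\frac{1}{2}a^2 + a)\,\widehat{c}_1(\overline{\mathcal{O}_X(1)})^2$; applying $f_{*}$ and using $f_{*}(\widehat{c}_1(\overline{\mathcal{O}_X(1)})^2) = \frac{1}{2}$ from (\ref{oxtwo}) together with the rank term $\frac{1}{4} - 2\zeta'(-1)$ yields $\frac{1}{4}(a+1)^2 - 2\zeta'(-1)$, exactly as announced in the text preceding the statement.

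Next I would evaluate the left-hand side through $\chi_{L^2}(\overline{\mathcal{O}_X(a)}) = \widehat{deg}(H^0(X,\mathcal{O}_X(a)),h_{L^2}) - \widehat{deg}(H^1(X,\mathcal{O}_X(a)),h_{L^2})$. Here the hypothesis $a \geq 0$ enters decisively: by Lemma \ref{cohomline} the group $H^1(X,\mathcal{O}_X(a))$ vanishes for $a \geq 0$, so only the $H^0$-term survives, and its Arakelov degree was just computed in the preceding lemma to be $-\frac{1}{2}\log\left(\prod_{i=0}^a \frac{i!(a-i)!}{(a+1)!}\right)$. Equating the two sides via $\chi_Q = \chi_{L^2} + \frac{1}{2}T$ and solving for $T$ then gives $T = \frac{1}{2}(a+1)^2 + \log\left(\prod_{i=0}^a \frac{i!(a-i)!}{(a+1)!}\right) - 4\zeta'(-1)$, which is the claimed formula.

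I do not expect a genuine obstacle internal to this theorem. The analytically deep input, namely the anomaly term built from the $R$-genus and the arithmetic Todd class together with the value $\zeta(-1) = -\frac{1}{12}$, has already been absorbed into Corollary \ref{ahrr}, and the only real computation left, the $L^2$-inner products of the monomials $x_0^{a-i}x_1^i$, is handled by the preceding lemma. The sole point requiring care is to record that the vanishing of $H^1$, and hence the simplification of $\chi_{L^2}$ to a single degree term, relies on the assumption $a \geq 0$; without it one would additionally have to pair $H^0$ and $H^1$ via Serre duality and track the second degree.
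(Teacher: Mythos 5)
Your proposal is correct and follows exactly the paper's own route: substitute the arithmetic Chern classes of $\overline{\mathcal{O}_X(a)}$ into Corollary \ref{ahrr} together with (\ref{oxtwo}) to get $\frac{1}{4}(a+1)^2-2\zeta'(-1)$ on the right, use the preceding lemma and the vanishing of $H^1(X,\mathcal{O}_X(a))$ for $a\geq 0$ to identify $\chi_{L^2}$ with the single $H^0$-degree, and solve $\chi_Q=\chi_{L^2}+\frac{1}{2}T(E)$ for the torsion. The paper's text is merely a terser version of the same computation, so there is nothing to add.
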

\begin{rem}
\normalfont Denote the dual $\mathbb{Z}$-module $Hom_{\mathbb{Z}}(M,\mathbb{Z})$ by $M^{\vee}$. For $a\geq 1$ there is a canonical isomorphism 
\begin{center}
$H^1(X,\mathcal{O}_X(-a))\cong (H^0(X,\mathcal{O}_X(a-2)))^{\vee}$ (Serre duality).
\end{center} 
This isomorphism is compatible with the $L^2$-metric, see (\cite[Theorem 1.4.(ii), p.27]{soul5}). So we have an isometry 
\begin{center}
$(H^1(X,\mathcal{O}_X(-a)),h_{L^2})\cong ((H^0(X,\mathcal{O}_X(a-2)))^{\vee},h_{L^2}^{\vee})$.
\end{center}
Moreover there is a canonical isomorphism $det(M^{\vee})\cong det(M)^{\vee}$.\\ 
Using these isomorphisms and $\widehat{deg}(L^{\vee},h^{\vee})=-\widehat{deg}(L,h)$ shows that 
\begin{center}
$\widehat{deg}(H^1(X,\mathcal{O}_X(-a)),h_{L^2})=-\widehat{deg}(H^0(X,\mathcal{O}_X(a-2)),h_{L^2})=\frac{1}{2}log(\prod\limits_{i=0}^{a-2} \frac{i!(a-2-i)!}{(a-1)!})$. 
\end{center}
So we get for $a\geq 1$:
\begin{center}
$T(\mathbb{C}\mathbb{P}^1,O_{\mathbb{C}\mathbb{P}^1}(-a))=\frac{1}{2}(-a+1)^2+log(\prod\limits_{i=0}^{a-2} \frac{i!(a-2-i)!}{(a-1)!})-4\zeta'(-1)$.
\end{center}
\end{rem}
\begin{rem}
\normalfont In (\cite{weng}) Weng computes the Ray-Singer analytic torsion for line bundles on $\mathbb{C}\mathbb{P}^n$ both with Fubini-Study metric. His main theorem gives a formula for the torsion with the help of combinatorial equations. In (\cite[5.1]{weng}) he applies this to the case $n=1$. His result differs from ours by multiplcation with $(-1)$. It also conflicts with his main theorem, compare for example the sign of the $log$-term. Maybe this a simple miscalculation or it is due to the fact that there are two definitions of the Ray-Singer analytic torsion, also differing by multiplcation with $(-1)$. The one we are using here 
\begin{center}
$T(E)=\sum\limits_{q\geq 0} (-1)^q q\zeta_q'(0)$ 
\end{center}
used in (\cite{mori1}) for example. The other definition  is used for example in (\cite{soul2}) and is given by 
\begin{center}
$T(E)=\sum\limits_{q\geq 0} (-1)^{q+1} q\zeta_q'(0)$. 
\end{center}
We also note that usually the analytic torsion is defined via $exp(T(E))$.
\end{rem}

\subsection{Applications to locally free sheaves of rank 2}
Now we want to apply the arithmetic Hirzebruch-Riemann-Roch theorem to locally free sheaves of rank two on $X$.\\
Let $\mathcal{E}$ be a locally free sheaf of rank two with associated triple $(a,b,Z)$, then we know its arithmetic Chern classes by (\ref{chern}). So we can put them in (\ref{ahrr}) and get 
\begin{center}
$\chi_Q(\overline{\mathcal{E}})=\frac{1}{4}(a+b)^2-\frac{1}{2}ab+\frac{1}{2}(a+b)-log(\#(H^0(Z,\mathcal{O}_Z)))+\frac{1}{2}-4\zeta'(-1)$,
\end{center}
using (\ref{oxtwo}). This gives the following
\begin{lem}
Let $\mathcal{E}$ be a locally free sheaf of rank two on $X$, then we have:
\begin{center}
$\chi_Q(\overline{\mathcal{E}})=\frac{(a+1)^2}{4}+\frac{(b+1)^2}{4}-log(\#H^0(Z,\mathcal{O}_Z))-4\zeta'(-1)$.
\end{center}
\end{lem}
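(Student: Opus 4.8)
The plan is to substitute the arithmetic Chern classes of $\overline{\mathcal{E}}$ from Theorem~(\ref{chern}) into the arithmetic Hirzebruch--Riemann--Roch formula of Corollary~(\ref{ahrr}) and then to carry out an elementary simplification. First I would recall that the arithmetic first Chern class is linear in the twist, i.e. $\widehat{c}_1(\overline{\mathcal{O}_X(m)})=m\,\widehat{c}_1(\overline{\mathcal{O}_X(1)})$ in $\widehat{CH}^1(X)$ (this is the additivity used already in the proof of Theorem~(\ref{chern}), from \cite[4.1.4]{soul}). With this, Theorem~(\ref{chern}) reads
\[
\widehat{c}_1(\overline{\mathcal{E}})=(a+b)\,\widehat{c}_1(\overline{\mathcal{O}_X(1)}),\qquad
\widehat{c}_2(\overline{\mathcal{E}})=ab\,\widehat{c}_1(\overline{\mathcal{O}_X(1)})^2+\log(\#H^0(Z,\mathcal{O}_Z)).
\]

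Next I would plug these into Corollary~(\ref{ahrr}) with $rk(\mathcal{E})=2$. Each of the three intersection terms $\tfrac12\widehat{c}_1(\overline{\mathcal{E}})^2$, $\widehat{c}_2(\overline{\mathcal{E}})$ and $\widehat{c}_1(\overline{\mathcal{E}})\widehat{c}_1(\overline{\mathcal{O}_X(1)})$ becomes a numerical multiple of the self-intersection $\widehat{c}_1(\overline{\mathcal{O}_X(1)})^2$, and applying $f_*$ together with the key identity $f_*(\widehat{c}_1(\overline{\mathcal{O}_X(1)})^2)=\tfrac12$ from~(\ref{oxtwo}) converts each into a rational number; the codimension-two part of $\widehat{c}_2(\overline{\mathcal{E}})$ contributes the torsion term $-\log(\#H^0(Z,\mathcal{O}_Z))$, while the rank term gives $2(\tfrac14-2\zeta'(-1))=\tfrac12-4\zeta'(-1)$. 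This reproduces precisely the intermediate expression
\[
\chi_Q(\overline{\mathcal{E}})=\tfrac14(a+b)^2-\tfrac12 ab+\tfrac12(a+b)-\log(\#H^0(Z,\mathcal{O}_Z))+\tfrac12-4\zeta'(-1)
\]
recorded just before the statement.

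The remaining step is purely algebraic: expanding $(a+b)^2=a^2+2ab+b^2$ lets the $-\tfrac12 ab$ term cancel the cross term and leaves $\tfrac{a^2+b^2}{4}$, and collecting the linear and constant parts $\tfrac12(a+b)+\tfrac12$ over the common denominator $4$ yields $\tfrac{a^2+b^2+2a+2b+2}{4}$, which factors as $\tfrac{(a+1)^2}{4}+\tfrac{(b+1)^2}{4}$. Substituting this back into the intermediate expression gives the claimed formula.

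I do not expect a real obstacle here, since all of the analytic content---the $R$-genus and analytic-torsion terms and the isometry $\overline{T_{X/Y}}\cong\overline{\mathcal{O}_X(2)}$---has already been absorbed into Corollary~(\ref{ahrr}). The only thing to watch is the bookkeeping: keeping track of which classes live in $\widehat{CH}^1(X)$ versus $\widehat{CH}^2(X)$, and making sure the factor $\tfrac12$ from~(\ref{oxtwo}) is applied to each self-intersection exactly once. The ``hardest'' part is thus merely verifying the elementary identity $\tfrac14(a+b)^2-\tfrac12 ab+\tfrac12(a+b)+\tfrac12=\tfrac{(a+1)^2}{4}+\tfrac{(b+1)^2}{4}$.
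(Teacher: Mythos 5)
Your proposal is correct and follows exactly the paper's own route: substitute the arithmetic Chern classes from Theorem~(\ref{chern}) into Corollary~(\ref{ahrr}), evaluate the pushforward via $f_{*}(\widehat{c}_1(\overline{\mathcal{O}_X(1)})^2)=\tfrac12$ from~(\ref{oxtwo}), and simplify the resulting quadratic expression in $a$ and $b$. The intermediate formula and the final algebraic identity you verify are precisely those appearing in the text immediately before the lemma.
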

\begin{rem}
\normalfont One has numerically $\zeta'(-1)\approx -0,1654211$. More exactly the value is given by $\zeta'(-1)=\frac{1}{12}-log(A)$. Here $A$ is the so called Glaisher-Kinkelin constant. This constant is defined by 
\begin{center}
$A:=\lim\limits_{n\to \infty} \dfrac{K(n+1)}{n^{n^2/2+n/2+1/12}exp(-n^2/4)}$.
\end{center}
Here $K(n)$ is the $K$-function defined by $K(n):=\prod\limits_{i=0}^{n-1} i^i$.
\end{rem}
One possible application of this lemma is to compute the torsion of $H^1(X,\mathcal{E}(n))$ for all $n\in \mathbb{Z}$ if $\mathcal{E}$ is indecomposable. We need the following fact:
\begin{lem}
Let $\mathcal{E}$ be a locally free sheaf on $X$, then $H^0(X,\mathcal{E})_{tors}=0$.
\end{lem}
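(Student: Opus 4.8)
The plan is to exploit that $X=\mathbb{P}^1_{\mathbb{Z}}$ is an \emph{integral} scheme whose function field has characteristic zero, so that a locally free sheaf is torsion free both as an $\mathcal{O}_X$-module and, crucially, with respect to multiplication by nonzero integers. The whole statement reduces to the injectivity of the germ-at-the-generic-point map together with the fact that $\mathbb{Q}$ sits inside the function field.

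First I would fix a global section $s\in H^0(X,\mathcal{E})$ and a nonzero integer $n$ with $n\cdot s=0$, and aim to conclude $s=0$. Let $\eta$ be the generic point of $X$ and $K=\mathcal{O}_{X,\eta}$ its function field; as recorded in the proof of Lemma \ref{exseq1} and the Notation, $K$ is the function field of $\mathbb{P}^1_{\mathbb{Q}}$, so $\mathbb{Q}\subset K$. The key step is that the restriction-to-stalk map $H^0(X,\mathcal{E})\to\mathcal{E}_\eta$ is injective. Indeed, locally $\mathcal{E}\cong\mathcal{O}_X^{\oplus r}$, and since $X$ is integral the map $\Gamma(U,\mathcal{O}_X)\hookrightarrow K$ is injective on every nonempty open $U$; hence a global section of $\mathcal{E}$ whose germ at $\eta$ vanishes is zero.

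Next I would observe that $\mathcal{E}_\eta$ is a $K$-vector space, and because $\mathbb{Q}\subset K$ the integer $n$ acts invertibly on it. Therefore $n\cdot s_\eta=(n\cdot s)_\eta=0$ forces $s_\eta=0$, and the injectivity established above gives $s=0$. This shows $H^0(X,\mathcal{E})_{tors}=0$. An equivalent route, which I would mention as an alternative, uses flatness: since $X$ is flat over $Y=\operatorname{Spec}(\mathbb{Z})$ and $\mathcal{E}$ is locally free, $\mathcal{E}$ is flat over $\mathbb{Z}$, so each stalk is a torsion-free $\mathbb{Z}$-module and multiplication by $n$ is injective on the sheaf $\mathcal{E}$; applying the left-exact functor $H^0(X,-)$ to $0\to\mathcal{E}\xrightarrow{\,n\,}\mathcal{E}$ yields injectivity of $n$ on $H^0(X,\mathcal{E})$ directly.

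There is no serious obstacle here; the only point requiring a little care is the injectivity of $H^0(X,\mathcal{E})\to\mathcal{E}_\eta$, which genuinely needs both the integrality of $X$ and the local freeness of $\mathcal{E}$ (torsion freeness suffices). I would make sure the argument nowhere uses that $\mathcal{E}$ has rank two, so that the lemma holds for locally free sheaves of arbitrary rank, exactly as stated.
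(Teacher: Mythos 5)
Your proof is correct and follows the same route as the paper: embed $H^0(X,\mathcal{E})$ into the stalk $\mathcal{E}_\eta$ at the generic point using integrality of $X$ and torsion freeness of $\mathcal{E}$, then observe that $\mathcal{E}_\eta$ is a torsion-free $\mathbb{Z}$-module (you make explicit that it is a $K$-vector space with $\mathbb{Q}\subset K$, which the paper leaves implicit). The flatness alternative you mention is a valid variant but not needed.
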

\begin{proof}
Since $X$ is integral and $\mathcal{E}$ is torsion free we have an injection 
\begin{center}
$H^0(X,\mathcal{E})\hookrightarrow \mathcal{E}_{\eta}$, 
\end{center}
here $\eta\in X$ is the generic point. Since $\mathcal{E}_{\eta}$ is a torsion free $\mathbb{Z}$-module we get the desired result.
\end{proof}
We know that for at least one $n\in \mathbb{Z}$ there has to be torsion, because otherwise $H^1(X,\mathcal{E}(n))$ is always free. But then $\mathcal{E}$ is decomposable by the result of Hanna.\\
The lemma and (\ref{degvol}) show 
\begin{center}
$\widehat{deg}((H^0(X,\mathcal{E}(n)),h_{L^2})= log(vol(H^0(X,\mathcal{E}(n))_{\mathbb{C}}^{+}/H^0(X,\mathcal{E}(n))))$. 
\end{center}
So if we knew the volume of $H^i(X,\mathcal{E}(n))$ for $i=0,1$, we could use the arithmetic Hirzebruch-Riemann-Roch theorem (\ref{ahrr}) to determine the torsion of $H^1(X,\mathcal{E}(n))$.
\addcontentsline{toc}{section}{References}
\bibliography{Artikel}

\begin{thebibliography}{ABKS92}

\bibitem[ABKS92]{soul3}
Abramovich, Burnol, Kramer, and Soul\'{e}.
\newblock {\em Lectures on {A}rakelov geometry}.
\newblock Cambridge University Press, 1992.

\bibitem[Fri98]{fried}
Robert Friedman.
\newblock {\em Algebraic surfaces and holomorphic vector bundles}.
\newblock Springer-Verlag, 1998.

\bibitem[Gas99]{gasb}
Carlo Gasbarri.
\newblock Hermitian vector bundles of rank two and adjoint systems on
  arithmetic surfaces.
\newblock {\em Topology}, 38(6):1161--1174, 1999.

\bibitem[Han76]{hanna}
Charles~C. Hanna.
\newblock Decomposing algebraic vector bundles on the projective line.
\newblock {\em Proceedings of the American Mathematical Society},
  61(2):196--200, 1976.

\bibitem[Har77]{hart}
Robin Hartshorne.
\newblock {\em Algebraic Geometry}.
\newblock Springer-Verlag, 1977.

\bibitem[HC90]{soul}
H.Gillet and C.Soul\'{e}.
\newblock Characteristic classes for algberaic vector bundles with {H}ermitian
  metric {I}.
\newblock {\em The Annals of Mathematics}, 131(1):163--203, 1990.

\bibitem[HC91]{soul5}
H.Gillet and C.Soul\'{e}.
\newblock Analytic torsion and the arithmetic {T}odd genus, with an appendix by
  {D}.{Z}agier.
\newblock {\em Topology}, 30(1):21--54, 1991.

\bibitem[HC92]{soul2}
H.Gillet and C.Soul\'{e}.
\newblock An arithmetic {R}iemann-{R}och theorem.
\newblock {\em Inventiones mathematicae}, 110:473--543, 1992.

\bibitem[HM82]{haze}
M.~Hazewinkel and C.F. Martin.
\newblock A short elementary proof of {G}rothedieck's theorem on algebraic
  vector bundles over the projective line.
\newblock {\em Journal of Pure and Applied Algebra}, 25:207--211, 1982.

\bibitem[Mor94a]{mori3}
Atsushi Moriwaki.
\newblock Bogomolov unstability on arithmetic surfaces.
\newblock {\em Mathematical Research Letters}, 1:601--611, 1994.

\bibitem[Mor94b]{mori1}
Atsushi Moriwaki.
\newblock Inequality of {B}ogomolov-{G}ieseker type on arithmetic surfaces.
\newblock {\em Duke Mathematical Journal}, 74(3):713--761, 1994.

\bibitem[Mor08]{mori2}
Atsushi Moriwaki.
\newblock Subsheaves of a {H}ermitian torsion free coherent sheaf on an
  arithmetic variety.
\newblock {\em Journal of Mathematics of Kyoto University}, 48(1):7--26, 2008.

\bibitem[OSS80]{oko}
Okonek, Schneider, and Spindler.
\newblock {\em Vector bundles on complex projective spaces}.
\newblock Birkh{\"a}user, 1980.

\bibitem[PJ78]{grif}
P.Griffiths and J.Harris.
\newblock {\em Principles of algebraic geometry}.
\newblock John Wiley \& Sons, 1978.

\bibitem[Rob72]{roberts}
Leslie~G. Roberts.
\newblock Indecomposable vector bundles on the projective line.
\newblock {\em Canadian Journal of Mathematics}, 24(1):149--154, 1972.

\bibitem[Sou94]{soul4}
Christophe Soul\'{e}.
\newblock A vanishing theorem on arithmetic surfaces.
\newblock {\em Inventiones mathematicae}, 116:577--599, 1994.

\bibitem[Wen95]{weng}
Lin Weng.
\newblock Regularized determinants of {L}aplacians for {H}ermitian line bundles
  over projective spaces.
\newblock {\em Journal of Mathematics of Kyoto University}, 35(3):341--355,
  1995.

\end{thebibliography}
\bibliographystyle{alpha}

\end{document}